\newcommand{\bD}{\mathbb{D}}
\newcommand{\E}{\mathop{{}\mathbb{E}}\nolimits}
\newcommand{\cF}{\mathscr{F}}
\newcommand{\enne}{\mathbb{N}}
\renewcommand{\P}{\mathbb{P}}
\newcommand{\erre}{\mathbb{R}}
\newcommand{\cL}{\mathscr{L}}
\DeclareMathOperator{\dom}{\mathsf{D}}
\newcommand{\embed}{\hookrightarrow}
\newtheorem{prop}{Proposition}[section]
\newtheorem{thm}[prop]{Theorem}
\newtheorem{lemma}[prop]{Lemma}
\theoremstyle{definition}
\newtheorem{hyp}{Hypothesis}
\theoremstyle{remark}
\newtheorem{rmk}[prop]{Remark}
\newtheorem{example}[prop]{Example}
\DeclarePairedDelimiter\abs{\lvert}{\rvert}
\DeclarePairedDelimiter\norm{\lVert}{\rVert}
\DeclarePairedDelimiterX\ip[2]{\langle}{\rangle}{#1,#2}
\begin{document}

\title{Absolute continuity of solutions to reaction-diffusion
  equations with multiplicative noise}

\author{Carlo Marinelli\thanks{Department of Mathematics, University
    College London, Gower Street, London WC1E 6BT, UK. URL:
    \texttt{http://goo.gl/4GKJP}} \and Llu\'is
  Quer-Sardanyons\thanks{Departament de Matem\`atiques, Universitat
    Aut\`onoma de Barcelona, 08193 Cerdanyola del Vall\`es
    (Barcelona), Catalonia, Spain.}}

\date{May 21, 2019}

\maketitle

\begin{abstract}
  We prove absolute continuity of the law of the solution, evaluated
  at fixed points in time and space, to a parabolic dissipative
  stochastic PDE on $L^2(G)$, where $G$ is an open bounded domain in
  $\erre^d$ with smooth boundary. The equation is driven by a
  multiplicative Wiener noise and the nonlinear drift term is the
  superposition operator associated to a real function which is
  assumed to be monotone, locally Lipschitz continuous, and growing
  not faster than a polynomial. The proof, which uses arguments of the
  Malliavin calculus, crucially relies on the well-posedness theory in
  the mild sense for stochastic evolution equations in Banach spaces.

  \medskip

  \noindent 2000 \emph{Mathematics Subject Classification}: 60H07; 60H15.

  \medskip

  \noindent \emph{Key words and phrases}: Stochastic PDEs,
  reaction-diffusion equations, Malliavin calculus.
\end{abstract}

\maketitle

\section{Introduction}
Let $G$ be a bounded domain of $\erre^d$, $d>1$, with smooth
boundary. Consider a semilinear stochastic equation of the type
\begin{equation}
  \label{eq:0}
  du(t) + Au(t)\,dt = f(u(t))\,dt + \sigma(u) B\,dW(t),
  \qquad u(0)=u_0,
\end{equation}
where $A$ is the negative generator of an analytic semigroup on
$L^q(G)$, $q \geq 2$, $f:\erre \to \erre$ is a locally
Lipschitz continuous decreasing function with polynomial growth,
$\sigma: \erre \to \erre$ is a Lipschitz continuous function, $B$ is a
$\gamma$-Radonifying operator from $L^2(G)$ to $L^q(G)$, and $W$ is a
cylindrical Wiener process on $L^2(G)$ (precise assumptions on the
data of the problem are provided in \S\ref{sec:WP} below). Then
\eqref{eq:0} admits a unique mild solution which is continuous in
space and time.
Our aim is to prove that the law of the random variable $u(t,x)$ is
absolutely continuous with respect to Lebesgue measure for every
fixed $(t,x) \in \erre_+ \times G$.
It seems that, somewhat surprisingly, this natural question has not
been addressed in the literature. In fact, all results of which we are
aware about existence (and regularity) of the density of solutions to
SPDEs with multiplicative noise deal with the case where $G$ is the
whole space, $-A$ is the Laplacian, and the drift coefficient $f$ is
(globally) Lipschitz continuous (see, e.g.,
\cite{Marquez-Mellouk-SarraSPA2001,nualart-LNM,Nualart-QuerPOTA,Sanz-libro}
and references therein). Our results do not rely on any one of these
assumptions. In particular, we essentially just assume that the
semigroup generated by $-A$ is self-adjoint and given by a family of
kernel operators, so that, for instance, very large classes of
elliptic second-order operators are allowed, and the function $f$ can
be of polynomial type. Another major difference with respect to the
above-mentioned works is that we rely almost exclusively on the
interpretation of \eqref{eq:0} as an equation for an $L^q(G)$-valued
process, and that we view the pointwise Malliavin derivative of its
solution as a process taking values in $L^q(G;H)$, where $H$ is a
suitably chosen Hilbert space. This point of view, which allows us to
rely on powerful techniques of the functional-analytic approach to
stochastic evolution equations on UMD Banach spaces, is probably the
most interesting aspect of this work. The more common random field
interpretation of \eqref{eq:0}, that seems the only one used in
previous work, at least in connection with techniques of the Malliavin
calculus, is used here very sparingly, essentially only to take the
pointwise Malliavin derivative of the solution to \eqref{eq:0}.

Existence and regularity of the density of solutions to semilinear
heat equations with additive noise, i.e. for the easier case where
$\sigma=1$ and $-A$ is the Laplacian, were obtained in
\cite{Marinelli-Nualart-Quer}. Those results, however, depend heavily
on the noise being additive, and cannot be extended to the general
setting considered here. In fact, if the noise is additive, then the
Malliavin derivative of the solution satisfies a deterministic
equation with random coefficients, which yields quite strong estimates
using pathwise arguments. On the other hand, if the noise is
multiplicative, then the Malliavin derivative is only expected to
satisfy a further stochastic evolution equation with quite singular
initial condition, which is much more difficult to handle than the
deterministic PDE arising in the case of additive noise. As a
consequence, while in \cite{Marinelli-Nualart-Quer} we obtained
existence as well as regularity of the density, here we can only show
existence. As it is natural to expect, regularity could be obtained
also in the case of multiplicative noise and Lipschitz continuous
drift. However, we concentrate here only on the existence issue, and
we shall deal with the regularity problem somewhere else, hopefully
also in the general case where $f$ is monotone and polynomially
bounded.

Let us briefly describe the main content of the paper. We first show
existence and uniqueness of a unique mild solution $u$ to \eqref{eq:0}
which is continuous in space and time. This follows by relatively
recent results on well-posedness in the mild sense for stochastic
evolution equations in Banach spaces (see \S\ref{sec:WP}). Assuming
that the semigroup generated by $-A$ is a family of kernel operators,
the mild solution can be interpreted also in the sense of random
fields.
Considering first the case where $f$ is Lipschitz continuous, so that
the mild solution is the unique fixed point of an operator $\Phi$,
this reformulation allows to compute the Malliavin derivative of
$\Phi$ applied to a class of sufficiently regular processes. Using
estimates for stochastic convolutions in Banach spaces, we show that
the fixed-point operator $\Phi$ leaves invariant a subspace of
Malliavin differentiable processes with finite moment. This yields, by
closability properties of the Malliavin derivative, that the unique
mild solution to \eqref{eq:0} is pointwise Malliavin
differentiable. As a second step, we provide sufficient conditions
ensuring that the Malliavin derivative is non-degenerate, adapting a
method used in \cite[theorem~5.2]{Nualart-QuerPOTA} for equations on
$\erre^d$ (see \S\ref{sec:Lip}). This yields, as is well known, the
pointwise absolute continuity of the law of the solution.  As
mentioned above, the results should be interesting in their own right,
as equations in domains (in dimension higher than one) do not appear
to have been considered in the literature.
Finally, in the general case of equations of reaction-diffusion type,
the pointwise absolute continuity of the law of the solution is
treated by localization techniques, i.e. by means of the
Bouleau-Hirsch criterion (see \S\ref{sec:loc}), and by convergence
results for stochastic evolution equations with locally
Lipschitz continuous coefficients in spaces of continuous functions.

\medskip

\noindent\textbf{Acknowledgments.} The first-named author is sincerely
grateful to Prof. S.~Albeverio for several very pleasant stays at the
Interdisziplin\"ares Zentrum f\"ur Komplexe Systeme, Universit\"at
Bonn, where most of the work for this paper was done.
The second-named author is supported by the grant MTM2015-67802P.


\section{Well-posedness in the space of continuous functions}
\label{sec:WP}
We are going to establish well-posedness in the mild sense for the
stochastic equation \eqref{eq:0} in a space of continuous functions,
using general well-posedness results for stochastic evolution
equations in UMD Banach spaces (see \cite{KvN2,vNVW}).
Assuming that the semigroup generated by $-A$ is a family of integral
operators, we shall also show that the solution thus obtained can be
viewed as a solution in the sense of random field (cf.
\cite{Dalang-EJP1999, Walsh-LNM}).

\subsection{Preliminaries}
Let us consider the following stochastic evolution equation, posed on
a general Banach space $X$:
\begin{equation}
  \label{eq:EE}
  du(t) + Au(t)\,dt = f(u(t))\,dt + B(u(t))\,dW(t),
  \qquad u(0)=u_0,
\end{equation}
where $W$ is a cylindrical Wiener process on a Hilbert space $U$, and
all other coefficients are specified below. The following
well-posedness result is a slightly simplified version of
\cite[theorem~4.9]{KvN2}.
\begin{thm}
  \label{thm:astro}
  Let $E$ be a UMD Banach space with type 2, such that $X$ is densely
  and continuously embedded in $E$ densely, and $A$ be a sectorial,
  accretive operator on $E$ such that the semigroup $S$ on $E$
  generated by $-A$ restricts to a $C_0$-semigroup of contractions on
  $X$. Assume that $f: X \to X$ is locally Lipschitz continuous and
  there exists $m>0$ such that
  \begin{align*}
    \ip[\big]{f(x+y)-f(y)}{x^*}
    &\lesssim 1 + \norm{y}^m - \norm{x}^m,\\[6pt]
    \norm[\big]{f(y)}
    &\lesssim 1 + \norm{y}^m
  \end{align*}
  for all $x$, $y \in X$ and $x^* \in \partial\norm{x}$.
  Let $p>2$ and assume that there exist numbers $\eta \in \erre_+$,
  with
  \[
  \eta < \frac12  - \frac{1}{p},
  \]
  such that $E_\eta := \dom((I+A)^\eta)$ is densely and continuously
  embedded in $X$.
  If $B: X \to \gamma(U,E)$ is locally Lipschitz continuous with
  linear growth, and $u_0 \in L^p(\Omega;X)$, then there exists a
  unique $X$-valued mild solution to \eqref{eq:EE}, which satisfies
  \[
  \E \sup_{t \leq T} \norm{u(t)}_X^p \lesssim 1 + \E\norm{u_0}_X^p.
  \]
\end{thm}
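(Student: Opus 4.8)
The natural strategy is a contraction-mapping argument on a suitable space of $X$-valued adapted processes, so that Theorem~\ref{thm:astro} follows essentially by quoting and slightly adapting \cite[theorem~4.9]{KvN2}. First I would fix $T>0$ and introduce the space $\mathcal{Z}_T$ of (equivalence classes of) progressively measurable processes $v \colon [0,T] \times \Omega \to X$ with $\E \sup_{t \leq T}\norm{v(t)}_X^p < \infty$, equipped with the obvious norm; this is a Banach space. On $\mathcal{Z}_T$ one defines the fixed-point map
\[
  \Phi(v)(t) = S(t)u_0 + \int_0^t S(t-s) f(v(s))\,ds + \int_0^t S(t-s) B(v(s))\,dW(s),
\]
and the claim is that, for $T$ small enough, $\Phi$ is a contraction on $\mathcal{Z}_T$ (or on a ball thereof), after which one patches the local solutions together to reach arbitrary $T$ and derives the a priori bound. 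The three terms are handled separately: the first is deterministic and continuous because $S$ restricts to a $C_0$-semigroup on $X$; the drift term is controlled using the local Lipschitz and growth bounds on $f$ together with the contractivity of $S$ on $X$; the stochastic convolution is the delicate one.

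The key step is the estimate for the stochastic convolution $s \mapsto \int_0^{\cdot} S(\cdot - s)B(v(s))\,dW(s)$ in $\mathcal{Z}_T$. Here one cannot work in $X$ directly: instead one uses that $B$ maps into $\gamma(U,E)$ and invokes the maximal-regularity / factorization estimates for stochastic convolutions on UMD type-2 spaces from \cite{vNVW}, which give, for $p>2$,
\[
  \E \sup_{t \leq T}\Bnorm{\int_0^t S(t-s)B(v(s))\,dW(s)}_{E_\eta}^p
  \lesssim \int_0^T \E \norm{B(v(s))}_{\gamma(U,E)}^p\,ds,
\]
provided $\eta < \tfrac12 - \tfrac1p$; the exponent restriction is exactly what makes the factorization argument (with the fractional kernel $(t-s)^{\eta-1}$ and H\"older in time) integrable. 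One then uses the continuous embedding $E_\eta \embed X$ to bring this back to the $X$-norm, and the linear growth of $B$ to close the estimate in terms of $\norm{v}_{\mathcal{Z}_T}$; the difference $\Phi(v_1)-\Phi(v_2)$ is treated identically using the (local) Lipschitz continuity of $B$, with the Lipschitz constant on a ball. The factor $T^{\delta}$ with $\delta>0$ coming from the time integral (after using $\eta < \tfrac12-\tfrac1p$ to spare a bit of integrability) is what produces the contraction for small $T$.

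The remaining points are routine. Local Lipschitz continuity of $f$ and $B$ forces one to first localize --- solve on a ball $\{\norm{v}_{\mathcal{Z}_T} \leq R\}$ with $R$ chosen in terms of $\E\norm{u_0}_X^p$, obtaining a local-in-time mild solution, then use a standard stopping-time continuation argument to rule out explosion: the one-sided bound $\ip{f(x+y)-f(y)}{x^*} \lesssim 1 + \norm{y}^m - \norm{x}^m$ for $x^* \in \partial\norm{x}$ is precisely the dissipativity that, fed into an It\^o-formula estimate for $\norm{u(t)}_X^p$ (or its Yosida-regularized version, to justify the computation), yields the global a priori bound $\E\sup_{t\leq T}\norm{u(t)}_X^p \lesssim 1 + \E\norm{u_0}_X^p$ and hence global existence. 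Uniqueness follows from the contraction property on each small interval. The main obstacle, as indicated, is the stochastic convolution estimate: getting the right space ($E_\eta$ versus $X$), the right exponent ($\eta<\tfrac12-\tfrac1p$), and the extra positive power of $T$ all at once --- but this is exactly the content of the cited well-posedness theory in \cite{KvN2,vNVW}, so in the paper it is legitimate to invoke it rather than reprove it.
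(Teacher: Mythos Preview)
The paper does not prove this theorem at all: it is stated as ``a slightly simplified version of \cite[theorem~4.9]{KvN2}'' and simply quoted from the literature. Your proposal is therefore not being compared against any argument in the paper --- and you yourself recognize this at the end, noting that ``in the paper it is legitimate to invoke it rather than reprove it.'' Your sketch (fixed-point map on $X$-valued processes, stochastic convolution controlled in $E_\eta$ via the factorization estimates of \cite{vNVW} and then embedded back into $X$ using $E_\eta \hookrightarrow X$, localization for the locally Lipschitz coefficients, and the one-sided dissipativity bound on $f$ to obtain the global a~priori estimate) is a faithful outline of the strategy in \cite{KvN2}, so nothing more is needed here.
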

Here $\partial\norm{x}$ stands for the subdifferential at $x$, in the
sense of convex analysis, of the convex function
$\norm{\cdot}$, that is, denoting the dual of $X$ by $X'$,
\[
  \partial\norm{x} = \bigl\{ x^* \in X':\, \norm{x^*}=1, \; \ip{x^*}{x}=1
  \bigr\}.
\]
Moreover, the notation $a \lesssim b$ means that there
exists a constant $N$ such that $a \leq Nb$. To emphasize the
dependence of $N$ on parameters $p_1,\ldots,p_n$, we shall write
$a \lesssim_{p_1,\ldots,p_n}
b$. 

\begin{rmk}
  In \cite{KvN2} the authors also require that
  \[
    \ip[\big]{-Ax + f(x+y)}{x^*} \lesssim 1 + \norm{y}^m + \norm{x}
  \]
  for every $x \in \dom(A\vert_X)$ and $x,y \in X$.  Since we are
  assuming that $A$ is accretive in $X$, it follows that
  $\ip{-Ax}{x^*} \leq 0$. Moreover,
  \begin{align*}
    \ip[\big]{f(x+y)}{x^*}
    &= \ip[\big]{f(x+y)-f(y)}{x^*} + \ip[\big]{f(y)}{x^*}\\
    &\lesssim 1 + \norm{y}^m + \abs[\big]{\ip[\big]{f(y)}{x^*}}
      \lesssim 1 + \norm{y}^m,
  \end{align*}
  hence their condition, under our assumptions, is automatically
  satisfied.
\end{rmk}

\begin{rmk}
  Further well-posedness results in $L^q$ spaces for semilinear
  parabolic SPDEs of accretive type, with more natural assumptions on
  the nonlinear drift term $f$, can be found in
  \cite{cm:Ascona13,cm:SIMA18,cm:ref,cm:semimg,cm:AP18}. See also
  \cite{cerrai03} for related results in spaces of continuous
  functions.
\end{rmk}

\medskip

We shall also need some basic facts on interpolation. The real and the
complex interpolation functors are denoted by $(\cdot,\cdot)$ and
$[\cdot,\cdot]$, respectively. Moreover, we shall write $X \embed Y$
to mean that $X$ is continuously embedded in $Y$.
\begin{lemma}
  \label{lm:interp}
  Let $X$ and $Y$ be two Banach spaces forming an interpolation pair,
  $A$ a positive operator on $X$, and
  $\theta,\theta' \in \mathopen]0,1\mathclose[$, $q,q' \in [1,\infty]$ be
  constants. The following statements hold true:
  \begin{itemize}
  \item[\emph{(a)}] if $X \supset Y$, then
    $(X,Y)_{\theta,q} \embed (X,Y)_{\theta',q'}$;
  \item[\emph{(b)}] $(X,Y)_{\theta,1} \embed (X,Y)_{\theta,\infty}$;
  \item[\emph{(c)}]
    $(X,Y)_{\theta,1} \embed [X,Y]_\theta \embed
    (X,Y)_{\theta,\infty}$;
  \item[\emph{(d)}]
    $\bigl( X, \dom(A) \bigr)_{\theta,1} \embed \dom(A^\theta) \embed
    \bigl( X, \dom(A) \bigr)_{\theta,\infty}$.
  \end{itemize}
\end{lemma}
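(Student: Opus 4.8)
The plan is to prove each of the four embeddings (a)--(d) by reducing to standard facts in interpolation theory, taking care only that the hypotheses actually match the standard statements. All four are essentially classical; the point of the lemma is to collect them in the precise form needed later, so the ``main obstacle'' is really just bookkeeping about which interpolation inequalities require which orderings of the parameters.

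For (a), since $Y \subset X$ the interpolation space $(X,Y)_{\theta,q}$ decreases as $\theta$ increases (more smoothness demanded) and, for fixed $\theta$, decreases as $q$ decreases. Concretely I would invoke the reiteration/monotonicity properties: first $(X,Y)_{\theta,q} \embed (X,Y)_{\theta,\infty}$ by the elementary inclusion in the second index, then use the $K$-functional characterization together with $K(t,a;X,Y) \le \max(1,t/s) K(s,a;X,Y)$ (valid because $\norm{\cdot}_X \le C\norm{\cdot}_Y$) to compare the $(\theta,\infty)$ and $(\theta',q')$ norms when $\theta > \theta'$; the case $\theta < \theta'$ is symmetric after swapping. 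Part (b) is the completely standard inclusion $(X,Y)_{\theta,q_0} \embed (X,Y)_{\theta,q_1}$ for $q_0 \le q_1$, applied with $q_0=1$, $q_1=\infty$, which follows at once from Jensen's inequality (or the monotonicity of the $\ell^{q}$ quasi-norms) applied to the dyadic $K$-functional expansion. Part (c) is the classical comparison between the real and complex methods: $[X,Y]_\theta$ always sits between $(X,Y)_{\theta,1}$ and $(X,Y)_{\theta,\infty}$, which is a theorem of Bergh--L\"ofstr\"om (see \cite[Theorem~4.7.1]{BerghLofstrom} type results); I would simply cite it.

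Part (d) is the only one with analytic content, and it is the standard description of domains of fractional powers of a positive operator via real interpolation: for $A$ positive one has the two-sided embedding $(X,\dom(A))_{\theta,1} \embed \dom(A^\theta) \embed (X,\dom(A))_{\theta,\infty}$. The proof I would sketch rests on the integral representation of $A^{-\theta}$ and the fact that $\dom(A^\theta)$ coincides with $[X,\dom(A)]_\theta$ up to equivalent norms (Komatsu's theorem), after which (d) follows by combining this identification with part~(c). Alternatively one can argue directly: the lower embedding follows because $A^\theta$ applied to an element written through the $K$-functional decomposition is controlled by the $(\theta,1)$-norm, and the upper embedding by estimating $K(t,x;X,\dom(A))$ through $x = (tA)(t+A)^{-1}x + t(t+A)^{-1}Ax$ together with the boundedness of $A^\theta(t+A)^{-1}A^{-\theta}$ uniformly in $t$. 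Either way this is textbook material, so I would keep the argument short and reference Triebel or Lunardi.

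In short, the write-up would be: prove (b) from $\ell^q$ monotonicity; prove (a) from (b) plus the $K$-functional scaling estimate available when $Y \subset X$; cite the Bergh--L\"ofstr\"om comparison for (c); and for (d) cite the Komatsu/Triebel characterization of $\dom(A^\theta)$ and deduce it from (c). The only genuine subtlety to flag is in (a), where one must use the hypothesis $X \supset Y$ to get the one-sided scaling bound on the $K$-functional; without that containment the monotonicity in $\theta$ fails.
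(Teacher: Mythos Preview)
Your plan is sound and essentially correct, but it differs markedly in style from what the paper actually does: the paper's ``proof'' of this lemma is nothing more than a list of pinpoint citations to Triebel's monograph (theorems~1.3.3, 1.10.1, the theorem on p.~64, and 1.15.2), with no argument given at all. So where you sketch genuine proofs via the $K$-functional, $\ell^q$-monotonicity, and the Bergh--L\"ofstr\"om comparison, the authors simply outsource everything. Your approach buys self-containment and makes clear which hypotheses are actually used; theirs is shorter and defers to a standard reference.

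Two small points to watch. First, in~(a) the lemma as stated in the paper omits the condition $\theta>\theta'$, but that ordering is implicit (and is how the lemma is applied later); your sketch handles only that direction correctly, and your remark that ``the case $\theta<\theta'$ is symmetric after swapping'' is misleading---with $Y\subset X$ the embedding genuinely goes only one way. Second, for~(d) your first route via Komatsu, namely identifying $\dom(A^\theta)$ with $[X,\dom(A)]_\theta$, requires more than positivity of $A$ (typically bounded imaginary powers), so it is not available under the bare hypothesis of the lemma. Your alternative direct argument through the resolvent decomposition is the right one and matches what Triebel's theorem~1.15.2 actually proves; I would drop the Komatsu detour or flag it as conditional.
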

\begin{proof}
  All statements can be found in \cite{TriebelI}. Specific references
  are provided for each result: (a) and (b) are parts of
  theorem~1.3.3, p.~25; (c) is a consequence of theorem 1, p.~64,
  taking into account definition~1.10.1, p.~61; (d) is part of theorem
  1.15.2, p.~101.
\end{proof}

\subsection{Existence of a unique mild solution}
Let us now turn to equation \eqref{eq:0}, about which the following
standing assumptions are assumed from now on.
\begin{hyp}
  \label{hyp:1}
  (a) The operator $A$ is the realization on $L^q(G)$, $q \geq 2$, of
  a second-order strongly elliptic operator with $C^\infty$
  coefficients, with Dirichlet boundary conditions.
  (b) The function $f:\erre \to \erre$ is an odd polynomial of degree
  $m>0$ with negative leading coefficient.
  (c) $W$ is a cylindrical Wiener process on $L^2(G)$ defined on a
  filtered probability space $(\Omega,\cF,(\cF_t)_{t \in [0,T]},\P)$,
  with $T \in \erre_+$, where $(\cF_t)_{t \in [0,T]}$ is the
  completion of the filtration generated by $W$.
\end{hyp}
\noindent It follows by (b) that $\abs{f(x)} \lesssim 1 + \abs{x}^m$ for all
$x \in \erre$.

\begin{prop}\label{prop:0}
  Assume that
  \[
  \frac{d}{2q} < \frac12 - \frac{1}{p},
  \]
  $\sigma \colon \erre \to \erre$ is locally Lipschitz continuous with
  linear growth, and $B \in \gamma(L^2(G),L^q(G))$. If
  $u_0 \in L^p(\Omega;C(\overline{G}))$, then \eqref{eq:0} admits a
  unique $C(\overline{G})$-valued mild solution $u$, which satisfies
  the estimate
  \[
    \E \sup_{t \leq T} \norm[\big]{u(t)}_{C(\overline{G})}^p \lesssim
    1 + \E\norm[\big]{u_0}_{C(\overline{G})}^p.
  \]
\end{prop}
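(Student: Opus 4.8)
The plan is to deduce Proposition~\ref{prop:0} from the abstract well-posedness result, Theorem~\ref{thm:astro}, by choosing the spaces $X$ and $E$ appropriately and verifying the structural hypotheses on $f$, $B$, and $A$. The natural choice is $X = C(\overline{G})$ (the space in which we want the solution to live) and $E = L^q(G)$ (a UMD Banach space of type~$2$ since $q \geq 2$, in which $A$ is sectorial and accretive by Hypothesis~\ref{hyp:1}(a)). First I would check that $C(\overline{G})$ embeds densely and continuously into $L^q(G)$, which is standard for a bounded domain, and that the analytic semigroup generated by $-A$ on $L^q(G)$ restricts to a $C_0$-semigroup of contractions on $C(\overline{G})$; the latter is a classical consequence of strong ellipticity with smooth coefficients and Dirichlet boundary conditions (the maximum principle gives contractivity on $C_0(\overline{G})$, and one invokes the known generation results for such realizations).

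Next I would verify the dissipativity and growth conditions on $f$ viewed as a superposition operator on $X = C(\overline{G})$. Since $f$ is an odd polynomial of degree $m$ with negative leading coefficient, for $u \in C(\overline{G})$ one has $\norm{f(u)}_{C(\overline{G})} \lesssim 1 + \norm{u}_{C(\overline{G})}^m$ pointwise, giving the second displayed bound. For the first bound, one uses that for scalars the inequality $(f(a+b)-f(b))\operatorname{sgn}(a) \lesssim 1 + \abs{b}^m - \abs{a}^m$ holds because $f$ is decreasing with polynomial leading behaviour; taking the supremum over $x \in \overline{G}$ and recalling that the subdifferential of the sup-norm at $x$ is represented by (signed) Dirac masses at maximizing points, one obtains $\ip{f(x+y)-f(y)}{x^*} \lesssim 1 + \norm{y}^m - \norm{x}^m$. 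Local Lipschitz continuity of the superposition operator $f$ on $C(\overline{G})$ is immediate from local Lipschitz continuity of the polynomial $f$ on bounded subsets of $\erre$.

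Then I would handle the diffusion coefficient and the scale of interpolation-type spaces. Writing $B(u) := \sigma(u)\,B$ where the multiplication operator $\sigma(u)$ acts on $L^q(G)$ and $B \in \gamma(L^2(G), L^q(G))$, the map $u \mapsto B(u)$ is locally Lipschitz from $C(\overline{G})$ into $\gamma(L^2(G), L^q(G))$ with linear growth: indeed, multiplication by a bounded continuous function is a bounded operator on $L^q(G)$, the ideal property of $\gamma$-radonifying operators gives $\norm{\sigma(u)B}_{\gamma(L^2,L^q)} \leq \norm{\sigma(u)}_{L^\infty} \norm{B}_{\gamma(L^2,L^q)} \lesssim (1 + \norm{u}_{C(\overline{G})}) \norm{B}_{\gamma}$, and local Lipschitz continuity follows similarly from the Lipschitz bound on $\sigma$ over bounded sets. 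For the required embedding $E_\eta = \dom((I+A)^\eta) \embed X = C(\overline{G})$, I would use that $\dom((I+A)^\eta)$ on $L^q(G)$ coincides, up to equivalence of norms, with a Bessel-potential (or, via Lemma~\ref{lm:interp}(d), a real-interpolation) space that embeds into $C(\overline{G})$ by Sobolev embedding precisely when $2\eta - d/q > 0$, i.e. $\eta > d/(2q)$; the hypothesis $d/(2q) < 1/2 - 1/p$ lets us pick $p > 2$ and such an $\eta$ simultaneously with $\eta < 1/2 - 1/p$. With all hypotheses of Theorem~\ref{thm:astro} verified, the conclusion — existence and uniqueness of a $C(\overline{G})$-valued mild solution together with the stated moment estimate — follows directly.

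The main obstacle is the verification that $-A$ generates a $C_0$-semigroup of \emph{contractions} on $C(\overline{G})$ (as opposed to merely a bounded analytic semigroup), and the careful identification of the fractional-domain space $\dom((I+A)^\eta)$ with a space that Sobolev-embeds into $C(\overline{G})$; both are standard in the literature on elliptic operators but require citing the right results and matching the exponent constraints, and the contractivity point is exactly where Hypothesis~\ref{hyp:1}(a) (smooth coefficients, Dirichlet conditions) is used in an essential way.
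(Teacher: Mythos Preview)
Your proposal is correct and follows essentially the same route as the paper: apply Theorem~\ref{thm:astro} with $X=C(\overline{G})$ and $E=L^q(G)$, check the drift and diffusion hypotheses, and obtain the embedding $E_\eta \embed C(\overline{G})$ via interpolation and Sobolev embedding under the exponent condition $\eta > d/(2q)$. The paper's proof is only slightly more explicit in one place, using a two-exponent argument $d/(2q)<\theta'<\theta<1/2-1/p$ together with Lemma~\ref{lm:interp}(a),(c),(d) to pass from $\dom((I+A)^\theta)$ to the complex interpolation space $[L^q,H^2_{q,D}]_{\theta'}=H^{2\theta'}_{q,D}$ before invoking Sobolev embedding.
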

\noindent Here $C(\overline{G})$ denotes the space of continuous functions on
$\overline{G}$, the closure of $G$.
\begin{proof}
  We are going to verify that the assumptions of
  theorem~\ref{thm:astro} are satisfied. It follows from hypothesis
  (A) that, for any $q \geq 2$, $A$ is a sectorial, accretive operator
  on $L^q(G)$, and that the semigroup $S$ generated by $-A$ restricts
  to a $C_0$-semigroup on $C(\overline{G})$ (see, e.g.,
  \cite[theorem~3.5, pp.~213-214 and theorem~3.7,
  p.~217]{Pazy}). Moreover, denoting the evaluation operator on
  $C(\overline{G})$ associated to $f$ by the same symbol, it is not
  difficult to see that $f$ satisfies the assumptions of
  theorem~\ref{thm:astro} (detail can be found in \cite[examples~4.2
  and 4.5]{KvN2}). Moreover, one easily verifies that
  $u \mapsto \sigma(u) B$ is locally Lipschitz continuous and has
  linear growth as a map from $C(\overline{G})$ to $\gamma(U,L^q(G))$.

  Let $\theta'<\theta$ be such that
  \[
  \frac{d}{2q} < \theta' < \theta < \frac12 - \frac{1}{p}.
  \]
  Setting $E:=L^q:=L^q(G)$, let us show that
  $E_{\theta} \embed C(\overline{G})$ densely: recall that, by
  lemma~\ref{lm:interp},
  \[
  E_\theta \embed \bigl( L^q,\dom(A) \bigr)_{\theta,\infty}
  \embed \bigl( L^q,\dom(A) \bigr)_{\theta',1}
  \embed \bigl[ L^q,\dom(A) \bigr]_{\theta'},
  \]
  where, by the characterization of $\dom(A)$ in
  \cite[theorem~4.9.1, p.~334]{TriebelI},
  \[
  \dom(A) = H_{q,D}^2(G) := \bigl\{
  \phi \in H_q^2(G):\; \phi\vert_{\partial G}=0 \bigr\}.
  \]
  Moreover, thanks to \cite[theorem~3.3.4, p.~321]{TriebelI}, one has
  \[
  \bigl[ L^q, H_{q,D}^2 \bigr]_{\theta'} = H_{q,D}^{2\theta'}
  \]
  if $2\theta' \neq 1/q$. Since $d>1$ and $2\theta' > d$ by
  hypothesis, the latter condition is obviously satisfied, hence
  $E_\theta \embed H_{q,D}^{2\theta'} \subset H_q^{2\theta'}$.
  Finally, the Sobolev embedding theorem (cf.~\cite[theorem~4.6.1,
  p.~328]{TriebelI}) yields
  $H_q^{2\theta'} \embed C(\overline{G})$, assuming that $2\theta' > d/q$,
  which is satisfied by hypothesis.
  We have thus shown that all assumptions of theorem~\ref{thm:astro}
  are met, hence the claim is proved.
\end{proof}
Note that $p > 2$ imply that, for $q$ large enough, the hypothesis
$d/(2q)< 1/2 - 1/p$ is always satisfied.

\begin{rmk}
  Instead of assuming that $f$ is an odd polynomial with negative
  leading coefficient, one could also assume that $f:\erre \to \erre$
  is locally Lipschitz continuous, polynomially bounded, and
  quasi-monotone, i.e. that there exists $\lambda>0$ such that
  $x \mapsto \lambda x - f(x)$ is increasing. In fact, assume that
  there exists $m>0$ such that $\abs{f(x)} \lesssim 1+\abs{x}^m$. By
  dissipativity of $f-\lambda I$,
  \[
    \ip[\big]{f(x+y)-\lambda(x+y) - (f(y)-\lambda y)}{x^*} \leq 0,
  \]
  hence
  \[
    \ip[\big]{f(x+y) - f(y)}{x^*} \leq \lambda \ip{x}{x^*} \leq \lambda,
  \]
  and
  \[
    \ip[\big]{f(x+y)}{x^*} \leq \lambda + \abs[\big]{\ip[\big]{f(y)}{x^*}}
    \lesssim \lambda + 1 + \norm{y}^m.
  \]
\end{rmk}

\subsection{Mild solution as random field}
We assume from now on, in addition to hypothesis~\ref{hyp:1}, the
following condition on the semigroup $S$ generated by $-A$.
\begin{hyp}
  \label{hyp:k}
  The semigroup $S=(S(t))_{t\geq 0}$ is sub-Markovian (i.e. $S(t)$ is
  positive and contracting in $L^\infty(G)$ for all $t \geq 0$) and
  admits a kernel, in the sense that there exists a function
  $K:\erre_+ \times G^2 \to \erre_+$ such that
  \[
    \bigl[ S(t)\phi \bigr](x) = \int_G K_t(x,y)\phi(y)\,dy
  \]
  for every $\phi \in L^q(G)$, $q \geq 1$.
\end{hyp}

Let $Q:=B B^*$, which is a symmetric and non-negative definite bounded
operator.  Recall that a cylindrical $Q$-Wiener process on
$L^2:=L^2(G)$ is a Gaussian family of random variables
$\mathcal W:=\{W_h(t),\, h\in L^2, t\geq 0\}$ such that, for all
$s,t\geq 0$ and $h,g\in L^2$, $\E(W_h(t))=0$ and
\[
 \E(W_h(t) W_g(s))=(t\land s)\langle Q h,g\rangle_{L^2}
\]
(in spite of the slight abuse of notation, no confusion should arise
with the cylindrical Wiener process $W$).  Let $L^2_Q$ be the Hilbert
space defined as the completion of $L^2$ with respect to the scalar
product $\langle h, g\rangle_{L^2_Q} := \langle Qh,
g\rangle_{L^2}$. Note that, denoting the pseudoinverse of $Q^{1/2}$ by
$Q^{-1/2}$, if $(e^k)_{k\in\enne}$ is a basis of $L^2$, then
$(\bar{e}^k):=(Q^{-1/2} e^k)$ is a basis of $L^2_Q$ . One can define
stochastic integrals with respect to $\mathcal{W}$ as follows (see,
e.g., \cite[Sec. 2]{Dalang-Quer}): let
$\{X(t,x): \, (t,x)\in [0,T]\times G\}$ be a predictable process in
$L^2(\Omega\times [0,T]; L^2_Q)$.  Then
\begin{equation}
  \label{eq:87}
  \int_0^T\!\!\int_G X(t,x) \,\mathcal{W}(dt,dx) :=
  \sum_{k=1}^\infty \int_0^T \ip[\big]{X(t,\cdot)}{\bar{e}^k}_{L^2_Q}
  \,dW_{\bar{e}^k}(t),
\end{equation}
and the isometry property reads
\[
  \E \abs[\bigg]{\int_0^T\!\!\int_G X(t,x) \,\mathcal{W}(dt,dx)}^2
  = \E \int_0^T \norm[\big]{X(t,\cdot)}^2_{L^2_Q}\,dt.
\]
In order to prove that the Malliavin derivative of the solution $u$ of
\eqref{eq:0} satisfies a stochastic equation, we need to verify that
$u$ can be interpreted as a mild solution to \eqref{eq:0} in the sense
of random fields (see, e.g.,
\cite{Dalang-EJP1999,Dalang-Quer,Walsh-LNM}). This is indeed the case
(cf. the analogous result for equations with additive noise in
\cite{Marinelli-Nualart-Quer}).
\begin{prop}
  \label{prop:99}
  Let the assumptions of proposition~\ref{prop:0} be satisfied. For
  any $(t,x) \in [0,T] \times G$, set $u(t,x):=[u(t)](x)$, where $u$
  is the unique $C(G)$-valued mild solution to \eqref{eq:0}.  Then
  for any $(t,x) \in \mathopen]0,T] \times G$,
  \begin{equation}
    \label{eq:60}
    \begin{split}
    u(t,x)
    & = \int_G K_t(x,y)u_0(y)\,dy
      + \int_0^t\!\!\int_G K_{t-s}(x,y)f(u(s,y))\,dy\,ds \nonumber \\
    &\quad + \int_0^t\!\!\int_G K_{t-s}(x,y) \sigma(u(s,y))\,\mathcal{W}(ds,dy).
    \end{split}
  \end{equation}
\end{prop}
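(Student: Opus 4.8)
The plan is to pass from the UMD-Banach-space notion of mild solution, namely
\[
  u(t) = S(t)u_0 + \int_0^t S(t-s)f(u(s))\,ds
  + \int_0^t S(t-s)\sigma(u(s))B\,dW(s),
\]
to its pointwise evaluation at $x \in G$, and to identify each of the three resulting terms with the corresponding term in \eqref{eq:60}. The first two terms are deterministic in structure and the identification is immediate from Hypothesis~\ref{hyp:k}: since $S(r)$ acts as an integral operator with kernel $K_r$, one has $[S(t)u_0](x) = \int_G K_t(x,y)u_0(y)\,dy$, and, because $s \mapsto f(u(s))$ is continuous with values in $C(\overline G)$, the $C(\overline G)$-valued Bochner integral $\int_0^t S(t-s)f(u(s))\,ds$ may be evaluated at $x$ by interchanging the evaluation functional with the integral, which gives $\int_0^t \int_G K_{t-s}(x,y)f(u(s,y))\,dy\,ds$. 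First I would record these two elementary facts, taking care that Fubini's theorem applies (the kernel $K$ is non-negative and $f(u(s,\cdot))$ is bounded on the compact time interval, so the double integral is absolutely convergent).

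The substantial point is the stochastic convolution term: one must show that
\[
  \Bigl[ \int_0^t S(t-s)\sigma(u(s))B\,dW(s) \Bigr](x)
  = \int_0^t\!\!\int_G K_{t-s}(x,y)\sigma(u(s,y))\,\mathcal W(ds,dy),
\]
where the left-hand side is a stochastic integral of $L^q(G)$-valued integrands against a cylindrical Wiener process on $L^2(G)$, and the right-hand side is the random-field stochastic integral \eqref{eq:87} of a scalar-valued random field against $\mathcal W$. The natural route is a stochastic Fubini-type argument: first check that the integrand process $r \mapsto K_r(x,\cdot)\sigma(u(t-r,\cdot))$ belongs to the space $L^2(\Omega\times[0,T];L^2_Q)$ required for \eqref{eq:87} to make sense — this uses that $\sigma$ has linear growth, that $u$ has finite $p$-th moment in $C(\overline G)$ by Proposition~\ref{prop:0}, and an integrability estimate on $K$ in $L^2_Q$ (equivalently, that $\int_0^T \norm{K_r(x,\cdot)}_{L^2_Q}^2\,dr < \infty$, which in turn follows from analyticity-type bounds on the kernel and the fact that $B\in\gamma(L^2,L^q)$). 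Then one tests the $L^q$-valued stochastic integral against the Dirac-like evaluation: more precisely, one writes $[S(t-s)g](x) = \ip{K_{t-s}(x,\cdot)}{g}$ as a pairing, expands $W$ along the orthonormal basis $(\bar e^k)$ of $L^2_Q$, and commutes the (deterministic, $x$-indexed) pairing with the sum of Itô integrals; the commutation is justified by an $L^2(\Omega)$-convergence argument using the isometry on each side and dominated convergence in $k$. It is convenient here to first establish the identity for simple integrands (piecewise-constant-in-time, finite-rank-in-space, so that everything is a finite sum and the interchange is trivial) and then pass to the limit, using the Banach-space stochastic-integral estimates of \cite{KvN2,vNVW} on one side and the isometry for \eqref{eq:87} on the other, together with the continuity and moment bounds on $u$ to approximate $\sigma(u)$ in the appropriate norms.

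The main obstacle is precisely this last interchange of the pointwise evaluation with the stochastic integral, i.e. making the stochastic-Fubini argument rigorous across two different formalisms (the $\gamma$-radonifying / UMD-space integral and the random-field integral), since the two integrals live in different function spaces and are a priori defined by different limiting procedures. The cleanest way around it is to exploit that both integrals, when restricted to simple integrands, coincide by direct computation, and that both depend continuously (in $L^2(\Omega)$, uniformly in a sense that survives the evaluation at $x$) on the integrand; the delicate estimate is controlling $\norm{K_r(x,\cdot)}_{L^2_Q}$ near $r=0$, which is where the kernel is most singular, so I would isolate a short lemma giving $\int_0^T \norm{K_r(x,\cdot)}_{L^2_Q}^2\,dr < \infty$ from the sub-Markov property and standard Gaussian/Sobolev bounds on heat kernels of second-order elliptic operators, after which the approximation and the two isometries close the argument.
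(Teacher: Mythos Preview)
Your approach is correct and follows the same overall skeleton as the paper's proof: the deterministic terms are immediate from the kernel representation, and the stochastic term is handled by expanding the cylindrical Wiener process along a basis and matching the two integrals term by term after checking that the random-field integrand is in $L^2(\Omega\times[0,T];L^2_Q)$.

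The one genuine difference is in how the integrability check is carried out. You propose to prove directly that $\int_0^T \norm{K_r(x,\cdot)}_{L^2_Q}^2\,dr<\infty$ from heat-kernel and sub-Markov bounds, and then combine with the moment bound on $u$. The paper instead bootstraps: it rewrites $\norm{K_{t-s}(x,\cdot)\sigma(u(s,\cdot))}^2_{L^2_Q}$ as $\sum_k \bigl([S(t-s)\sigma(u(s))Q^{1/2}e^k](x)\bigr)^2$ and observes that the $ds$-integral of the $L^2(G)$-norm of this series is finite simply because the $L^2$-valued stochastic convolution on the left of \eqref{eq:86} is already known to be well defined from the mild-solution theory. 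This gives the needed bound for almost every $x$ without any separate kernel estimate. Your route is more self-contained and yields the identity for every $x$ rather than almost every $x$, at the cost of having to produce an honest small-time estimate on $\norm{K_r(x,\cdot)}_{L^2_Q}$; the paper's route is shorter but yields the identity only as an equality in $L^2(G)$, hence pointwise only for a.e.\ $x$. Either is acceptable for the purposes of the rest of the paper, and your simple-integrand approximation to justify the interchange is a perfectly standard way to make rigorous what the paper compresses into a single sentence.
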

\begin{proof}
  As in the proof of \cite[proposition 3.1]{Marinelli-Nualart-Quer},
  it suffices to show that, for every $t \in \mathopen]0,T]$ and for
  almost every $x \in G$, the process
  \[
  (s,y) \mapsto K_{t-s}(x,y)\sigma(u(s,y))
  \]
  belongs to $L^2(\Omega \times [0,T];L^2_Q)$ and that
  \begin{equation}
    \label{eq:86}
    \int_0^t S(t-s)\sigma(u(s))B\,dW(s) =
    \int_0^t \!\! \int_G K_{t-s}(\cdot,y) \sigma(u(s,y)) \mathcal{W}(ds,dy)
  \end{equation}
  as an equality in $L^2$.  Recalling that
  $(\bar{e}^k)=(Q^{-1/2} e^k)$, is a basis of the Hilbert space
  $L^2_Q$, one easily verifies that
  \[
    \norm[\big]{K_{t-s}(x,\cdot)\sigma(u(s,\cdot))}^2_{L^2_Q}
    = \sum_{k=1}^\infty \bigl( [S(t-s)\sigma(u(s))](\tilde{e}^k)(x) \bigr)^2,
  \]
  where $(\tilde{e}^k):=(Q^{1/2}e^k)$ is a basis of $Q^{1/2}(L^2)$. Note that
  \[
  \E \int_0^t \sum_{k=1}^\infty
  \norm[\big]{[S(t-s)\sigma(u(s))](\tilde{e}^k)}^2_{L^2}ds < \infty,
  \]
  because the stochastic integral on the left-hand side of
  \eqref{eq:86} is well defined. Thus, for almost all $x \in G$,
  \[
  \E \int_0^t \norm[\big]{K_{t-s}(x,\cdot)\sigma(u(s,\cdot))}^2_{L^2_Q}
  < \infty,
  \]
  so the stochastic integral on the right-hand side of \eqref{eq:86}
  is well defined. It remains equality in \eqref{eq:86}. Using the
  standard formal expansion of the cylindrical Wiener process $W$ as
  \[
  W(t)=\sum_{k=1}^\infty e^k w_k(t),
  \]
  where $w_k:=W_{\bar{e}^k}$, $k\geq 1$, form a family of independent
  standard one-dimensional Wiener processes, one has
  \[
    \int_0^t S(t-s)\sigma(u(s))B\,dW(s)
    = \sum_{k=1}^\infty \int_0^t\!\!\int_G
    K_{t-s}(\cdot,y)[\sigma(u(s))B e^k](y)\,dy\,dw_k(s).
  \]
  Then \eqref{eq:86} follows taking into account the definition
  \eqref{eq:87} and that $BB^*=Q$.
\end{proof}


\section{Equations with Lipschitz continuous coefficients}
\label{sec:Lip}
We assume throughout this section that the coefficients $f$ and
$\sigma$ in equation \eqref{eq:0} are Lipschitz continuous. We are
going to prove that, for any fixed $(t,x)\in (0,T]\times G$, the law
of the solution $u(t,x)$ to \eqref{eq:0} is absolutely continuous
with respect to the Lebesgue measure. For this, note that the
Gaussian space in which we will make use of the Malliavin calculus'
techniques is determined by the isonormal Gaussian process on the
Hilbert space $H := L^2(0,T;L^2_Q)$ which can be naturally
associated to the cylindrical $Q$-Wiener process $\mathcal{W}$
defined in the previous section (see \cite{nualart}).

We will first deal with the Malliavin differentiability of the
solution, and then we shall provide sufficient conditions implying
that the pointwise Malliavin derivative is non-degenerate.

We need further assumptions, that will be assumed to hold from now on.
\begin{hyp}
  \label{hyp:eta}
  One has
  \[
    \frac{d}{2q} < \frac12 - \frac1p.
  \]
  Moreover, $B \in \gamma(L^2,L^q)$ and $u_0 \in C(\overline{G})$.
\end{hyp}
\begin{hyp}
  \label{hyp:sa}
  The semigroup $S$ is self-adjoint and Markovian.
\end{hyp}
Recall also that we assume that hypotheses \ref{hyp:1} and \ref{hyp:k}
are in force throughout. By proposition~\ref{prop:0}, it follows that
\eqref{eq:0} admits a unique $C(\overline{G})$-valued mild solution
$u$, and that \eqref{eq:0} can also be written as an equality of
random fields.

\subsection{Pointwise Malliavin differentiability of the solution}
The main result of this section is the following.
\begin{thm}
  \label{thm:diff}
  Let $u \in L^p(\Omega;C([0,T];C(\overline{G})))$ be the unique mild
  solution to \eqref{eq:0}. Then
  \[
  u \in L^\infty([0,T] \times G; \bD^{1,\infty})
  \]
  and  the family of Malliavin derivatives
  $\{Du(t,x)\}_{(t,x)\in [0,T] \times G}$ satisfies the following
  linear equation in $H$:
  \begin{equation}
    \label{eq:diff}
    \begin{split}
      Du(t,x) &= v_0(t,x)
      + \int_0^t\!\!\int_G K_{t-s}(x,y) F(s,y) Du(s,y)\,dy\,ds \\
      &\quad + \int_0^t\!\!\int_G K_{t-s}(x,y) \Sigma(s,y)
      Du(s,y)\,\mathcal{W}(ds,dy)
    \end{split}
  \end{equation}
  where
  \[
  v_0(t,x):=(\tau,z) \mapsto K_{t-\tau}(x,z) \sigma(u(\tau,z))
  \,1_{[0,t]}(\tau),
  \]
  and $F$, $\Sigma \colon \Omega \times [0,T] \times G \to \erre$ are
  adapted bounded random fields.
\end{thm}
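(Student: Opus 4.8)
The plan is to combine a Picard iteration scheme with the closability of the Malliavin derivative, and then to differentiate the random-field equation directly. Since $f$ and $\sigma$ are Lipschitz continuous, the mild solution $u$ is the unique fixed point of the map
\[
  \Phi(v)(t) := S(t)u_0 + \int_0^t S(t-s)f(v(s))\,ds
  + \int_0^t S(t-s)\sigma(v(s))B\,dW(s)
\]
on $L^p(\Omega;C([0,T];C(\overline{G})))$ (if necessary after replacing the norm by an equivalent one with an exponential weight in time, or working on short intervals and concatenating), so that the Picard iterates $u^0 := S(\cdot)u_0$, $u^{n+1} := \Phi(u^n)$ converge to $u$ there; in particular $u^n(t,x) \to u(t,x)$ in $L^p(\Omega)$ uniformly in $(t,x)$, and $\sup_n \E\sup_{s \leq T}\norm{u^n(s)}_{C(\overline{G})}^p < \infty$. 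By proposition~\ref{prop:99}, each $u^n$ also admits a random-field representation of the form~\eqref{eq:60} with $u^{n-1}$ in place of $u$ inside the integrals.

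First I would show, by induction on $n$, that $u^n(t,x) \in \bD^{1,\infty}$ for a.e.\ $(t,x)$ and that $Du^n(t,x)$ satisfies equation~\eqref{eq:diff} with $u^{n-1}$, $v_0^n$, $F^n$, $\Sigma^n$ in place of $u$, $v_0$, $F$, $\Sigma$, where $v_0^n(t,x) := (\tau,z) \mapsto K_{t-\tau}(x,z)\sigma(u^{n-1}(\tau,z))1_{[0,t]}(\tau)$, and $F^n$, $\Sigma^n$ are the adapted, jointly measurable random fields, bounded respectively by $\mathrm{Lip}(f)$ and $\mathrm{Lip}(\sigma)$, provided by the chain rule for Lipschitz functions composed with $\bD^{1,p}$ random variables (they agree a.e.\ with $f'(u^{n-1})$ and $\sigma'(u^{n-1})$). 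The case $n=0$ is immediate, $u^0(t,x)$ being deterministic. For the inductive step one applies $D$ to the random-field equation written for $u^n$, using: the chain rule just mentioned to differentiate $f(u^{n-1}(s,y))$ and $\sigma(u^{n-1}(s,y))$; the commutation of $D$ with Lebesgue--Bochner integration; and the commutation of $D$ with the cylindrical stochastic integral, namely $D\bigl( \int_0^t\!\!\int_G X(s,y)\,\mathcal{W}(ds,dy) \bigr) = \bigl[ (r,z) \mapsto X(r,z) \bigr] + \int_0^t\!\!\int_G DX(s,y)\,\mathcal{W}(ds,dy)$, which is legitimate because the integrand is adapted and, by the inductive hypothesis and the estimates of the next paragraph, belongs to the relevant $L^2$-spaces; the boundary term produces precisely $v_0^n(t,x)$. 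The differentiability of the iterates and the bounds of the next paragraph are established within the same induction.

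The quantitative core is the uniform bound $\sup_n \sup_{(t,x)} \E\norm{Du^n(t,x)}_H^p < \infty$ for every $p \in [2,\infty)$. Writing $\phi_n(t) := \sup_{x \in G} \E\norm{Du^n(t,x)}_H^p$, one estimates the three terms of the recursion separately. The datum $v_0^n$ is bounded in $C(\overline{G};H)$ uniformly in $t$ and $n$ by stochastic-convolution estimates: by the computation in the proof of proposition~\ref{prop:99}, its squared $H$-norm equals $\int_0^t \sum_k \bigl( [S(t-\tau)(\sigma(u^{n-1}(\tau))\tilde{e}^k)](x) \bigr)^2 \, d\tau$, which, by the linear growth of $\sigma$, the smoothing of the analytic semigroup, and the standing hypothesis $d/(2q) < 1/2 - 1/p$, is $\lesssim 1 + \E\sup_{s \leq T}\norm{u^{n-1}(s)}_{C(\overline{G})}^p$, hence bounded uniformly in $n$. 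The Lebesgue term is controlled by Minkowski's integral inequality in $H$, the bound $\abs{F^n} \leq \mathrm{Lip}(f)$, and the sub-Markov property $\int_G K_{t-s}(x,y)\,dy \leq 1$, giving a contribution $\lesssim \int_0^t \phi_{n-1}(s)\,ds$. The stochastic term, which (arguing as for~\eqref{eq:86}) is an $L^q(G;H)$-valued stochastic convolution --- $L^q(G;H)$ being a UMD space of type~$2$, since $H$ is Hilbert and $q \geq 2$ --- is estimated in $C(\overline{G};H)$, via the maximal inequalities for stochastic convolutions together with the semigroup smoothing, exactly as in the proof of proposition~\ref{prop:0}, using $\abs{\Sigma^n} \leq \mathrm{Lip}(\sigma)$, to give again $\lesssim \int_0^t \phi_{n-1}(s)\,ds$. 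Therefore $\phi_n(t) \leq C + C \int_0^t \phi_{n-1}(s)\,ds$ with $C$ independent of $n$, and iterating this inequality yields $\sup_n \sup_{t \leq T} \phi_n(t) < \infty$. I expect this step --- in essence the analysis of the linear stochastic evolution equation~\eqref{eq:diff} with the singular initial datum $v_0$ --- to be the main obstacle, and it is precisely where the Banach-space stochastic calculus is indispensable.

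Finally, for each $(t,x)$ the sequence $(u^n(t,x))_n$ is bounded in $\bD^{1,p}$ and converges in $L^p(\Omega)$ to $u(t,x)$; by closability of the Malliavin derivative --- in the form that a sequence bounded in $\bD^{1,p}$ and convergent in $L^p$ has its limit in $\bD^{1,p}$, with Malliavin derivatives converging weakly in $L^p(\Omega;H)$ --- it follows that $u(t,x) \in \bD^{1,p}$ for every $p$, i.e.\ $u \in L^\infty([0,T] \times G; \bD^{1,\infty})$, and that $Du^n(t,x) \rightharpoonup Du(t,x)$. In particular $u(s,y) \in \bD^{1,2}$ with $\E\int_0^t\!\!\int_G \norm{Du(s,y)}_H^2\,dy\,ds < \infty$, so the chain rule and commutation rules used above apply to $u$ itself: differentiating the random-field equation~\eqref{eq:60} gives~\eqref{eq:diff}, with $F(s,y)$ and $\Sigma(s,y)$ the bounded, adapted, measurable selections agreeing a.e.\ with $f'(u(s,y))$ and $\sigma'(u(s,y))$. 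Differentiating~\eqref{eq:60} directly is preferable to passing to the limit in the recursion above, since $F^n$ and $\Sigma^n$ need not converge pointwise.
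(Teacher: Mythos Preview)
Your approach is the same as the paper's: Picard iteration, uniform $\bD^{1,p}$-bounds on the iterates via a Gronwall-type recursion, then closability of $D$, and finally direct differentiation of the random-field equation. The paper works with $\phi_n(t):=\E\norm{Du_n(t)}^p_{L^\infty(G;H)}$ rather than your $\sup_x\E\norm{Du^n(t,x)}^p_H$; this matters because the maximal inequality for the $H$-valued stochastic convolution naturally yields a bound involving $\E\norm{Du^{n-1}(s)}^p_{L^\infty(H)}$ on the right-hand side, which you cannot control by $\sup_x\E$, so your recursion does not close as written---switching to the stronger norm fixes this immediately, and your sub-Markov estimate for the drift term survives (and is indeed simpler than the paper's route through $E^q_\eta(H)\hookrightarrow L^\infty(H)$).

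The one genuine point you gloss over is the phrase ``exactly as in the proof of proposition~\ref{prop:0}'' for the stochastic term. Proposition~\ref{prop:0} works in $L^q(G)$, but here you need the stochastic maximal inequality in $L^q(G;H)$, and for that you need $S$ to extend to an \emph{analytic} semigroup on $L^q(G;H)$. This vector-valued extension is not automatic: the paper invokes Pisier's theorem on Hilbert-space-valued extensions of sub-Markovian self-adjoint semigroups, and this is precisely the role of hypothesis~\ref{hyp:sa}. Without it (or an equivalent substitute), the analyticity on $L^q(G;H)$, hence the smoothing estimate and the maximal inequality, are unavailable. Once you add this ingredient, your argument goes through; the paper obtains $\phi_n(t)\lesssim N+\int_0^t (t-s)^{-2\alpha}\phi_{n-1}(s)\,ds$ rather than your non-singular kernel, but either form iterates to a uniform bound.
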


\noindent The stochastic integral in \eqref{eq:diff} must be
interpreted as an $H$-valued integral with respect to the cylindrical
$Q$-Wiener process $\mathcal{W}$ (see, e.g.,
\cite[\S3]{Nualart-QuerPOTA}).

\medskip

The following estimate plays an important role in the proof
theorem~\ref{thm:diff} as well as in several further developments. We
shall write $E^q_\eta$, for any $q \geq 1$ and $\eta>0$, to denote
$(I+A)^{-\eta}L^q$.
\begin{lemma}
  \label{lm:gemma}
  Let $v \in L^p(\Omega;C([0,T];C(\overline{G})))$ be adapted and
  $w:\Omega \times [0,T] \times G \to H$ be the process defined as
  \[
    w(t,x):=(\tau,z) \longmapsto K_{t-\tau}(x,z) \sigma(v(\tau,z))
    \,1_{[0,t]}(\tau).
  \]
  For any $\eta \in \mathopen] d/(2q),1/2-1/p \mathclose[$ one has
  \[
    \sup_{x \in G} \norm[\big]{w(t,x)}^2_H \lesssim
    \int_0^t \norm[\big]{S(t-s)\sigma(v(s))B}^2_{\gamma(L^2,E^q_\eta)}\,ds.
  \]
\end{lemma}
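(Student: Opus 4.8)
The plan is to estimate the $H$-norm of $w(t,x)$ directly from its definition as a function of $(\tau,z)$, exploiting the fact that $H = L^2(0,T;L^2_Q)$ and that the $L^2_Q$-norm of a kernel slice was already computed in the proof of proposition~\ref{prop:99}. First I would write
\[
  \norm[\big]{w(t,x)}^2_H
  = \int_0^t \norm[\big]{K_{t-\tau}(x,\cdot)\sigma(v(\tau,\cdot))}^2_{L^2_Q}\,d\tau,
\]
and then invoke the identity already established, namely
\[
  \norm[\big]{K_{t-\tau}(x,\cdot)\sigma(v(\tau,\cdot))}^2_{L^2_Q}
  = \sum_{k=1}^\infty \bigl( [S(t-\tau)\sigma(v(\tau))](\tilde e^k)(x) \bigr)^2,
\]
where $(\tilde e^k) = (Q^{1/2}e^k)$ is an orthonormal basis of $Q^{1/2}(L^2)$. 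The point is that the right-hand side is, pointwise in $x$, exactly the squared $\gamma(L^2,\erre)$-type norm (or rather the squared Hilbert–Schmidt-type sum) of the evaluation-at-$x$ functional composed with $S(t-\tau)\sigma(v(\tau))B$.

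The heart of the argument is then to pass from the pointwise evaluation at $x$ to the $E^q_\eta$-norm uniformly in $x$. Since $\eta > d/(2q)$, the chain of embeddings used in the proof of proposition~\ref{prop:0} — namely $E^q_\eta \embed H^{2\eta}_{q,D} \embed C(\overline G)$ via the Sobolev embedding theorem — gives a bounded evaluation map $\delta_x : E^q_\eta \to \erre$, $\phi \mapsto \phi(x)$, with operator norm bounded uniformly in $x \in G$. Writing $T_{t,\tau} := S(t-\tau)\sigma(v(\tau))B \in \gamma(L^2,E^q_\eta)$ (this membership is where one uses $v \in C(\overline G)$, the linear growth of $\sigma$, and the smoothing of $S$, as in proposition~\ref{prop:0}), the composition $\delta_x \circ T_{t,\tau}$ lies in $\gamma(L^2,\erre)$ with
\[
  \norm[\big]{\delta_x \circ T_{t,\tau}}_{\gamma(L^2,\erre)}
  \leq \norm{\delta_x}_{\mathscr L(E^q_\eta,\erre)}\,
        \norm[\big]{T_{t,\tau}}_{\gamma(L^2,E^q_\eta)}
  \lesssim \norm[\big]{T_{t,\tau}}_{\gamma(L^2,E^q_\eta)},
\]
using the right ideal property of $\gamma$-radonifying operators. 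Finally, for scalar target the $\gamma(L^2,\erre)$-norm coincides with the $L^2$-operator norm, which equals $\bigl(\sum_k ([S(t-\tau)\sigma(v(\tau))](\tilde e^k)(x))^2\bigr)^{1/2}$; squaring, integrating in $\tau$ over $[0,t]$, and taking the supremum over $x \in G$ yields the claim.

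The main obstacle, I expect, is the bookkeeping around the two different "square sums" — the one defining $\norm{\cdot}_{L^2_Q}$ in terms of the basis $(\tilde e^k)$ of $Q^{1/2}(L^2)$ versus the $\gamma(L^2,E^q_\eta)$-norm of $S(t-\tau)\sigma(v(\tau))B$ — and checking that they are matched correctly by the identity $BB^* = Q$, so that evaluation-at-$x$ of the operator $S(t-\tau)\sigma(v(\tau))B \colon L^2 \to E^q_\eta$ reproduces precisely the $L^2_Q$-pairing computed earlier. One must also be slightly careful that $\delta_x$ is genuinely bounded \emph{uniformly} in $x$: this follows because the Sobolev embedding constant in $H^{2\eta}_{q,D} \embed C(\overline G)$ does not depend on $x$, and the supremum norm on $C(\overline G)$ dominates every $\abs{\phi(x)}$. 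Beyond that the argument is a routine combination of the ideal property of $\gamma$-norms and the embeddings already in place, so no genuinely new estimate is needed — the lemma is essentially a repackaging of facts assembled in Section~\ref{sec:WP}.
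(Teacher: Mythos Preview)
Your proposal is correct and follows essentially the same route as the paper: both start from $\norm{w(t,x)}^2_H = \int_0^t \sum_k \bigl([S(t-\tau)\sigma(v(\tau))Be^k](x)\bigr)^2\,d\tau$ and then bound the pointwise evaluation at $x$ by the $E^q_\eta$-norm via the embedding $E^q_\eta \embed L^\infty$. The only cosmetic difference is that the paper writes out the $\gamma$-norm as an explicit Gaussian second moment and invokes Minkowski's inequality, whereas you phrase the same step as the right-ideal property of $\gamma$-radonifying operators applied to the evaluation functional $\delta_x$.
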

\begin{proof}
  Since $H=L^2(0,T;L^2_Q)$ and $\langle{Qh},{h}\rangle = \norm{B^*h}^2_{L^2}$ for
  every $h \in L^2_Q$, denoting a complete orthonormal basis of $L^2$
  by $(e^k)_{k\geq 1}$, it follows by Plancherel's theorem that
  \begin{align*}
    \norm[\big]{w(t,x)}^2_H
    &= \int_0^t \norm[\big]{%
      B^* K_{t-\tau}(x,\cdot)\sigma(v(\tau,\cdot))}^2_{L^2}\,d\tau\\
    &= \int_0^t \sum_{k\in\enne}
      \langle{K_{t-\tau}(x,\cdot) \sigma(v(\tau,\cdot))},{Be^k}\rangle^2\,d\tau\\
    &= \int_0^t \sum_{k \in \enne} \biggl(%
      \int_G K_{t-\tau}(x,z)\sigma(v(\tau,z))[Be^k](z)\,dz \biggr)^2 d\tau\\
    &= \int_0^t \sum_{k\in\enne}
      \bigl[S(t-\tau) \sigma(v(\tau)) Be^k\bigr](x)^2\,d\tau,
  \end{align*}
  where we have used the integral representation of the semigroup $S$
  in the last step. Let $(\gamma_k)$ be a sequence of independent
  standard Gaussian random variable on an auxiliary probability space
  $\Omega'$. Then
  \[
    \norm[\big]{w(t,x)}^2_H = \int_0^t \E'\abs[\bigg]{\sum_{k\in\enne}
    \gamma_k \bigl[S(t-\tau) \sigma(v(\tau)) Be^k\bigr](x)}^2\,d\tau,
  \]
  hence also, by Minkowski's inequality and the embedding
  $E^q_\eta \embed L^\infty$,
  \begin{align*}
    \sup_{x \in G} \norm[\big]{w(t,\cdot)}^2_{H}
    &\lesssim \int_0^t \E'\norm[\bigg]{\sum_{k\in\enne} \gamma_k
      \bigl[S(t-\tau) \sigma(v(\tau)) Be^k\bigr]}_{E^q_\eta}^2\,d\tau\\
    &= \int_0^t \norm[\big]{S(t-\tau) \sigma(v(\tau)) B}^2_{\gamma(L^2,E^q_\eta)}
      \,d\tau.
      \qedhere
  \end{align*}
\end{proof}

The proof of theorem~\ref{thm:diff} uses a maximal inequality for
stochastic convolutions, that is a special (simpler) case of
\cite[proposition~4.2]{vNVW}. We shall use the notation $R \diamond F$
to denote the process
\[
R \diamond F \colon t \mapsto \int_0^t R(t-s) F(s)\,dW(s),
\]
where $R$ is an analytic semigroup of contractions on a UMD Banach
space $E$ and $F \colon \Omega \times \erre_+ \to \cL(L^2,E)$ is an
$L^2$-strongly measurable and adapted process. Denoting the generator
of $R$ by $-C$, we shall write $E_\eta$, for any $\eta>0$, to denote
$\dom((I+C)^\eta)$.
\begin{prop}
  \label{prop:sc}
  Let $\alpha \in \mathopen]0,1/2\mathclose[$, $p>2$, $\theta \geq 0$
  be such that
  \[
  \theta < \alpha - \frac1p,
  \]
  and $T>0$. There exists $\varepsilon>0$ such that
  \[
  \E\norm[\big]{R \diamond F}^p_{C([0,T];E_\eta)} \lesssim
  T^{p\varepsilon} \int_0^T
  \E\norm[\big]{s \mapsto (t-s)^{-\alpha}F(s)}^p_{\gamma(L^2(0,t;L^2),E)}.
  \]
\end{prop}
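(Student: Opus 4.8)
The plan is to deduce Proposition~\ref{prop:sc} from the general maximal inequality \cite[proposition~4.2]{vNVW} for stochastic convolutions on UMD Banach spaces with type~$2$, by reading off the correct parameters and handling the extra factor of $T^{p\eps}$ via a H\"older-in-time argument. First I would recall the statement of \cite[proposition~4.2]{vNVW}: for $R$ an analytic $C_0$-semigroup of contractions with generator $-C$ on a UMD space $E$ of type~$2$, and for $0 \leq \theta < \alpha < 1/2$ with $\alpha - \theta > 1/p$, one has
\[
  \E\norm[\big]{R \diamond F}^p_{C([0,T];E_\theta)}
  \lesssim \E\norm[\big]{s \mapsto (T-s)^{-\alpha}F(s)}^p_{\gamma(L^2(0,T;L^2),E)},
\]
where the right-hand side is the $\gamma$-radonifying norm of the singular kernel $s \mapsto (T-s)^{-\alpha}F(s)$; the quantitative dependence on $T$ is what one has to extract. (I note the statement in the excerpt writes $E_\eta$ on the left while the hypotheses are phrased in terms of $\theta$; I will read this as $E_\theta$, i.e. $\eta = \theta$.)

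Next I would address the gain of a power of $T$. Pick an auxiliary exponent $\alpha' \in \mathopen]\theta + 1/p, \alpha\mathclose[$, so that $\alpha - \alpha' =: \eps > 0$ and $\alpha' - \theta > 1/p$ still holds; apply \cite[proposition~4.2]{vNVW} with $\alpha'$ in place of $\alpha$. For a fixed $t \leq T$, write $(t-s)^{-\alpha'} = (t-s)^{\eps}\,(t-s)^{-\alpha}$ and use that $(t-s)^\eps \leq T^\eps$ on $[0,t]$; since multiplying the integrand pointwise by the scalar bound $T^\eps$ only enlarges the $\gamma$-norm, this gives
\[
  \norm[\big]{s \mapsto (t-s)^{-\alpha'}F(s)}_{\gamma(L^2(0,t;L^2),E)}
  \leq T^{\eps}\,\norm[\big]{s \mapsto (t-s)^{-\alpha}F(s)}_{\gamma(L^2(0,t;L^2),E)},
\]
and raising to the $p$-th power and integrating the expectation yields exactly the factor $T^{p\eps}$. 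One small point to check is that the right-hand side of \cite[proposition~4.2]{vNVW} is stated with the terminal-time kernel $(T-s)^{-\alpha'}$ over $[0,T]$, whereas the claimed estimate has $(t-s)^{-\alpha}$; these agree once one observes that $\sup_{t\le T}$ of the $C([0,t];E_\theta)$-norm is the $C([0,T];E_\theta)$-norm, and that the kernel estimate above is uniform in $t \le T$, so one may first bound for each fixed $t$ and then take the supremum, or equivalently apply the cited proposition directly on $[0,T]$.

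The main obstacle, such as it is, is bookkeeping rather than a genuine difficulty: one must make sure that the hypotheses of \cite[proposition~4.2]{vNVW} are genuinely met --- in particular that $E$ (here an $L^q$ space, hence UMD and of type~$2$ for $q \geq 2$) and the semigroup satisfy the analyticity and contractivity requirements, and that the constraint chain $\theta < \alpha' < \alpha < 1/2$ with $\alpha' - \theta > 1/p$ can be satisfied, which is exactly the assumption $\theta < \alpha - 1/p$ with $\alpha < 1/2$ stated in the proposition (leaving room to slide $\alpha'$ strictly between). A secondary point worth a line is that the fractional domain spaces $E_\theta = \dom((I+C)^\theta)$ used here coincide (with equivalent norms) with those appearing in \cite{vNVW}, which is immediate since we use the same operator $C$. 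Once these are in place the estimate follows, so I would present the proof as: fix $\alpha' \in \mathopen]\theta+1/p,\alpha\mathclose[$, apply \cite[proposition~4.2]{vNVW} with exponent $\alpha'$, and absorb $(t-s)^{\alpha-\alpha'} \le T^{\alpha-\alpha'}$ into the constant to produce $T^{p\eps}$ with $\eps = \alpha - \alpha'$.
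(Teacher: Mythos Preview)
The paper does not actually prove this proposition: it is introduced as ``a special (simpler) case of \cite[proposition~4.2]{vNVW}'' and then stated without argument. Your plan --- apply that result with an auxiliary exponent $\alpha' \in \mathopen]\theta+1/p,\alpha\mathclose[$ and absorb $(t-s)^{\alpha-\alpha'} \leq T^{\alpha-\alpha'}$ into the constant via the ideal property of the $\gamma$-norm to manufacture the factor $T^{p\eps}$ with $\eps=\alpha-\alpha'$ --- is precisely the natural way to make that citation quantitative, and is consistent with what the paper has in mind. Your reading of $E_\eta$ as $E_\theta$ is also correct; this is a typo in the statement.

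One minor bookkeeping point: the right-hand side of the proposition, as written in the paper, carries an integral $\int_0^T \ldots\,dt$ (an $L^p$-in-$t$ norm of the $\gamma$-radonifying norm), whereas your paraphrase of \cite[proposition~4.2]{vNVW} has a single $\gamma$-norm over $[0,T]$. This does not damage your argument: the pointwise-in-$t$ comparison $(t-s)^{-\alpha'} \leq T^\eps (t-s)^{-\alpha}$ and the ideal property work just as well under the $t$-integral, so whichever form of the cited inequality one starts from, the $T^{p\eps}$ extraction goes through unchanged.
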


We shall also need a deep result by Pisier (see \cite[theorem~1.2 and
remark~1.8]{Pis:hol} as well as \cite[p.~5730]{Xu:Hinf}) on
vector-valued extensions of analytic semigroup, according to which
hypothesis~\ref{hyp:sa} implies that $(S(t) \otimes I_H)_{t \geq 0}$,
where $I$ denotes the identity of $H$, admits a (unique) extensions
from $L^q \otimes H$ to $L^q(H)$, denoted by $S_H$, which is again
analytic.
Let $A_H$ denote the negative generator of $S_H$ and
$(\lambda+A_H)_{\lambda>0}^{-1}$ its resolvent. The Laplace transform
identity
\[
(\lambda+A_H)^{-1} = \int_0^\infty e^{-\lambda t} S_H(t)\,dt
\]
implies that $(\lambda+A_H)^{-1}$ coincides with the unique continuous
linear extension of $(\lambda+A)^{-1} \otimes I_H$ to $L^q(H)$.
By hypothesis \ref{hyp:eta} there exists $\eta \in \mathopen]
d/(2q),1/2-1/p \mathclose[$ such that $\dom(A^\eta) \embed L^\infty$,
hence $(I+A)^{-\eta} \in \cL(L^q,L^\infty)$.  Since $(I+A)^{-\eta}$ is
positivity preserving by hypothesis~\ref{hyp:k}, $(I+A)^{-\eta}$
admits a unique extension to a continuous linear operator from
$L^q(H):=L^q(G;H)$ to $L^\infty(H):=L^\infty(G;H)$, with the same norm
(see, e.g., \cite[theorem~12.2]{Janson}). By the above, recalling
well-known expressions for fractional powers of closed operators (see,
e.g., \cite[\S2.6]{Pazy}), this extension coincides with
$(I+A_H)^{-\eta}$. Therefore, setting $E^q_\eta(H):=(I+A)^{-\eta}
L^q(H)$, we have $E^q_\eta(H) \embed L^\infty(H)$.

\medskip

\begin{proof}[Proof of theorem~\ref{thm:diff}]
  Let $\Phi$ be the fixed-point operator associated to equation
  \eqref{eq:0}, i.e.
  \[
  \Phi: v \longmapsto S(t)u_0 + \int_0^t S(t-s)f(v(s))\,ds
  + \int_0^t S(t-s) \sigma(v(s))B\,dW(s).
  \]
  It follows by the (the proof) of theorem~\ref{thm:astro} that the
  operator $\Phi$, or a suitable power of it, is a contractive
  endomorphism of $L^p(\Omega;C([0,T];C(\overline{G})))$. We are going
  to show that, for any $p>2$, there exists $T_0>0$, a positive
  constants $c<1$ depending on $T_0$, and a positive constant $N$
  depending on the $L^p(\Omega;C([0,T];C(\overline{G})))$
  norm of $v$, such that
  \begin{equation}
    \label{eq:dfv}
    \norm[\big]{D\Phi(v)}_{L^\infty([0,T_0] \times G;L^p(\Omega;H))} \leq N
    + c \norm[\big]{Dv}_{L^\infty(0,T_0;L^p(\Omega;L^\infty(G;H)))}.
  \end{equation}
  Let $v \in L^p(\Omega;C([0,T];C(\overline{G})))$ be such that $Dv
  \in L^\infty(0,T;L^p(\Omega;L^\infty(G;H)))$. Writing
  \begin{align*}
  \bigl[ \Phi(v) \bigr](t,x) &= \int_G K_t(x,y)u_0(y)\,dy
  + \int_0^t\!\!\int_G K_{t-s}(x,y) f(v(s,y))\,dy\,ds\\
  &\quad + \int_0^t\!\!\int_G K_{t-s}(x,y) \sigma(v(s,y))\,\mathcal{W}(dy,ds),
  \end{align*}
  well-known criteria of Malliavin calculus imply that the Malliavin
  derivatives of all terms on the right-hand side exist, so that
  $D\bigl[ \Phi(v) \bigr](t,x)$ can be written as the right-hand side
  of \eqref{eq:diff} with $u$ replaced by $v$. The proof of
  \eqref{eq:dfv} will be split in several steps, where each term
  appearing in the expression of $D\Phi(v)$ is estimated.

  \medskip

  \noindent\textsc{Step 1.} Let us set, for every
  $(t,x)$, $(\tau,z) \in [0,T] \times G$,
  \[
  w_0(t,x):=(\tau,z) \mapsto K_{t-\tau}(x,z) \sigma(v(\tau,z))
  \,1_{[0,t]}(\tau).
  \]
  Let $\eta \in
  \mathopen]d/(2q),1/2-1/p\mathclose[$. Lemma~\ref{lm:gemma} yields
  \[
    \norm[\big]{w_0(t,\cdot)}^2_{L^\infty(H)} \lesssim
    \int_0^t \norm[\big]{S(t-\tau) \sigma(v(\tau)) B}^2_{\gamma(L^2,E^q_\eta)}
      \,d\tau,
  \]
  where
  \[
    \norm[\big]{S(t-\tau) \sigma(v(\tau)) B}_{\gamma(L^2,E^q_\eta)}
    \lesssim (t-\tau)^{-\eta} \,
    \norm[\big]{\sigma(v)}_{C([0,T];C(\overline{G}))} \,
    \norm[\big]{B}_{\gamma(L^2,L^q)}.
  \]
  This implies
  \begin{align*}
    \E \norm[\big]{w_0}^p_{L^\infty([0,T] \times G;H)}
    &\lesssim \Bigl(1 + \E \norm[\big]{v}^p_{C([0,T];C(\overline{G}))} \Bigr)
      \norm[\big]{B}^p_{\gamma(L^2,L^q)} \sup_{t\leq T} \biggl( \int_0^t
      (t-\tau)^{-2\eta}\,d\tau \biggr)^{p/2}\\
    &\lesssim \Bigl(1 + \E \norm[\big]{v}^p_{C([0,T];C(\overline{G}))} \Bigr)
      \norm[\big]{B}^p_{\gamma(L^2,L^q)} \, T^{p(1-2\eta)/2},
  \end{align*}
  where the last term on the right-hand side is finite by assumption.

  \medskip

  \noindent
  \textsc{Step 2.}  Let $\alpha<1/2$ such that $\eta < \alpha-1/p$.
  Recalling that $E^q_\eta(H) \embed L^\infty(H)$, Minkowski's and
  Jensen's inequality yield
  \begin{align*}
    \norm[\bigg]{\int_0^t S(t-s) F(s) Dv(s) \,ds}^2_{L^\infty(H)}
    &\lesssim_T \int_0^t \norm[\big]{S(t-s) F(s) Dv(s)}^2_{E^q_\eta(H)}\,ds\\
    &\lesssim \int_0^t (t-s)^{-2\eta} \norm[\big]{Dv(s)}^2_{L^q(H)}\,ds.
  \end{align*}
  Since $\eta < \alpha - 1/p$ by assumption, we have
  $-2\eta > -2\alpha + 2/p$, hence
  $-2\eta = -2\alpha + 2/p + \varepsilon$, with $\varepsilon>0$. Then
  \begin{align*}
    \int_0^t (t-s)^{-2\eta} \norm[\big]{Dv(s)}^2_{L^q(H)}\,ds
    &= \int_0^t (t-s)^{-2\alpha} (t-s)^{2/p+\varepsilon}
      \norm[\big]{Dv(s)}^2_{L^q(H)}\,ds\\
    &\leq t^{2/p+\varepsilon} \int_0^t (t-s)^{-2\alpha}
      \norm[\big]{Dv(s)}^2_{L^q(H)}\,ds\\
    &\lesssim_T \int_0^t s^{-2\alpha} \norm[\big]{Dv(t-s)}^2_{L^q(H)}\,ds.
  \end{align*}
  As the measure $\mu$ on $[0,t]$ defined as
  \[
    \mu(ds) := \frac{1-2\alpha}{t^{1-2\alpha}}s^{-2\alpha}\,ds
  \]
  is a probability measure, it follows by Jensen's inequality that
  \begin{align*}
    \biggl( \int_0^t s^{-2\alpha} \norm[\big]{Dv(t-s)}^2_{L^q(H)}\,ds
    \biggr)^{p/2}
    &= \biggl( \frac{t^{1-2\alpha}}{1-2\alpha} \int_0^t
      \norm[\big]{Dv(t-s)}^2_{L^q(H)} \, \mu(ds) \biggr)^{p/2}\\
    &\lesssim t^{(1-2\alpha)p/2} \int_0^t \norm[\big]{Dv(t-s)}^p_{L^q(H)}
      \, \mu(ds)\\
    &\lesssim t^{(1-2\alpha)(p/2-1)} \int_0^t s^{-2\alpha}
      \norm[\big]{Dv(t-s)}^p_{L^q(H)}\,ds\\
    &\lesssim_T \int_0^t (t-s)^{-2\alpha} \norm[\big]{Dv(s)}^p_{L^q(H)}\,ds.
  \end{align*}
  Therefore
  \begin{align*}
    \E\norm[\bigg]{\int_0^t S(t-s) F(s) Dv(s) \,ds}^p_{L^\infty(H)}
    \lesssim_T \int_0^t (t-s)^{-2\alpha} \E\norm[\big]{Dv(s)}^p_{L^\infty(H)}\,ds.
  \end{align*}

  \medskip

  \noindent
  \textsc{Step 3.} Using again the continuous embedding
  $E^q_\eta(H) \embed L^\infty(H)$, we have
  \begin{align*}
    \sup_{x \in G} \E\norm[\big]{[S \diamond (\Sigma Dv B)](t,x)}_H^p
    &\leq \E\norm[\big]{S \diamond (\Sigma Dv B)}^p_{C([0,t];L^\infty(H))}\\
    &\lesssim \E\norm[\big]{S \diamond (\Sigma Dv B)}^p_{C([0,t];E^q_\eta(H))}\\
    &\lesssim_T \E\int_0^t \norm[\big]{%
      (\tau-\cdot)^{-\alpha} \Sigma\,Dv\,B}^p_{\gamma(L^2(0,\tau;L^2),L^q(H))}\,d\tau\\
    &\lesssim \int_0^t \E\norm[\big]{%
      (\tau-\cdot)^{-\alpha} \Sigma\,Dv\,B}^p_{L^2(0,\tau;\gamma(L^2,L^q(H)))}\,d\tau,
  \end{align*}
  where the third inequality follows by proposition \ref{prop:sc}, as
  $L^q(H)$ is a UMD Banach space and $\eta < \alpha-1/p$, and the
  fourth estimate follows by Fubini's theorem and the embedding
  \[
    L^2(0,\tau;\gamma(L^2,L^q(H))) \embed
    \gamma(L^2(0,\tau;L^2),L^q(H)),
  \]
  which holds because $L^q(H)$ has type 2.  Since $Dv(s) \in
  L^\infty(H)$ by assumption and $\Sigma \in L^\infty([0,T] \times G)$
  by the Lipschitz continuity of $\sigma$, it follows that
  \[
    \norm[\big]{(\tau-s)^{-\alpha} \Sigma(s) Dv(s)
      B}_{\gamma(L^2,L^q(H))} \leq (\tau-s)^{-\alpha}
    \norm[\big]{\Sigma}_{L^\infty_{t,x}} \norm[\big]{Dv(s)}_{L^\infty_x(H)}
    \norm[\big]{B}_{\gamma(L^2,L^q)},
  \]
  hence
  \[
    \norm[\big]{(\tau-\cdot)^{-\alpha}
      \Sigma\,Dv\,B}^2_{L^2(0,\tau;\gamma(L^2,L^q(H)))}
    \leq \norm[\big]{B}^2_{\gamma(L^2,L^q)} \norm[\big]{\Sigma}^2_{L^\infty_{t,x}}
    \int_0^\tau (\tau-s)^{-2\alpha} \norm[\big]{Dv(s)}^2_{L^\infty_x(H)}\,ds.
  \]
  Proceeding as in the previous step, we obtain
  \[
    \E\norm[\big]{(\tau-\cdot)^{-\alpha}
      \Sigma\,Dv\,B}^p_{L^2(0,\tau;\gamma(L^2,L^q(H)))} \lesssim_T
    \int_0^\tau (\tau-s)^{-2\alpha}
    \E\norm[\big]{Dv(s)}^p_{L^\infty_x(H)}\,ds,
  \]
  therefore, by Tonelli's theorem,
  \begin{align*}
    \sup_{x \in G} \E\norm[\big]{\bigl[S \diamond
    (\Sigma Dv B)](t,x)}_H^p
    &\lesssim_T \int_0^{t} \int_0^\tau (\tau-s)^{-2\alpha}
      \E\norm[\big]{Dv(s)}^p_{L^\infty(H)}\,ds\,d\tau\\
    &= \int_0^t \E\norm[\big]{Dv(s)}^p_{L^\infty(H)}
      \int_s^t (\tau-s)^{-2\alpha} \,d\tau\,ds,
  \end{align*}
  where
  \[
    \int_s^t (\tau-s)^{-2\alpha} \,d\tau = \int_0^{t-s} \tau^{-2\alpha}\,d\tau
    = \frac{1}{1-2\alpha} (t-s)^{1-2\alpha},
  \]
  hence
  \[
    \sup_{x \in G} \E\norm[\big]{\bigl[S \diamond
      (\Sigma Dv B)](t,x)}_H^p \lesssim_T \int_0^t (t-s)^{-2\alpha}
    \E\norm[\big]{Dv(s)}^p_{L^\infty(H)}\,ds.
  \]

  \medskip

  \noindent
  \textsc{Step 4.} Setting
  \begin{gather*}
    \phi(t) := \E\norm[\big]{Dv(t)}^p_{L^\infty(H)}, \qquad
    \psi(t) := \E\norm[\big]{D\Phi(v)(t)}^p_{L^\infty(H)},\\
    N := 1 + \E \norm[\big]{v}^p_{C([0,T];C(\overline{G}))},
  \end{gather*}
  the estimates in the previous steps can be written as
  \[
    \psi(t) \lesssim_T N + \int_0^t (t-s)^{-2\alpha} \phi(s)\,ds,
  \]
  hence, using the notation $h^*(s) := \sup_{r \leq s} \abs{h(r)}$ for
  any function $h \colon \erre \to \erre$ for which it makes sense,
  \[
    \psi(t) \lesssim_T N + \phi^*(t) \int_0^t (t-s)^{-2\alpha} \,ds
    = N + \frac{1}{1-2\alpha} t^{1-2\alpha} \phi^*(t),
  \]
  thus also
  \[
    \psi^*(t) \lesssim_T N + \frac{1}{1-2\alpha} t^{1-2\alpha}
    \phi^*(t),
  \]
  from which \eqref{eq:dfv} follows.

  Let $u_0$ be identified with the process equal to $u_0$ for all $t
  \in [0,T]$, which clearly belongs to
  $L^p(\Omega;C([0,T];C(\overline{G})))$ and is such that $Du_0 \in
  L^\infty(0,T;L^p(\Omega;L^\infty(G;H)))$, and introduce the sequence
  of processes $(u_n)$, $u_n:=\Phi(u_{n-1})$. Then $u_n$ converges to
  $u$ in $L^p(\Omega;C([0,T];C(\overline{G})))$, possibly along a
  subsequence of the type $(kn)$, with constant $k$ (if $\Phi$ is not
  a contraction, but $\Phi^k$ is). In particular, $(u_n)$ is bounded in
  $L^p(\Omega;C([0,T];C(\overline{G})))$. This in turn implies, thanks
  to \eqref{eq:dfv}, that $(Du_n)$ is bounded in $L^\infty([0,T_0]
  \times G;L^p(\Omega;H))$. Let us show that this actually implies
  that $(Du_n)$ is bounded in $L^\infty([0,T] \times
  G;L^p(\Omega;H))$. In fact, setting
  \[
    \phi_n(s) := \E\norm[\big]{Du_n(s)}^p_{L^\infty(H)}, \qquad \phi_0
    := 1 + \sup_{n \in \enne} \E
    \norm[\big]{u_n}^p_{C([0,T];C(\overline{G}))} < \infty,
  \]
  we have already shown that
  \[
    \phi_{n+1}(t) \lesssim_T \phi_0 + \int_0^t (t-s)^{-2\alpha} \phi_n(s)\,ds
    \qquad \forall t \in [0,T],
  \]
  and that $(\phi^*_n(T_0))_n$ is bounded. We now proceed by
  induction: assuming that $(\phi^*_n(jT_0))_n$ is bounded, let us
  show that $(\phi^*_n((j+1)T_0))_n$ is also bounded. Let
  $jT_0 < t \leq (j+1)T_0$. We have
  \begin{align*}
    \phi_{n+1}(t)
    &\lesssim_T \phi_0 + \int_0^t (t-s)^{-2\alpha} \phi_n(s)\,ds\\
    &= \phi_0 + \int_0^{jT_0} (t-s)^{-2\alpha} \phi_n(s)\,ds
      + \int_{jT_0}^t (t-s)^{-2\alpha} \phi_n(s)\,ds,
  \end{align*}
  where $t>jT_0$ implies $t-s>jT_0-s$ and
  $(t-s)^{-2\alpha} < (jT_0-s)^{-2\alpha}$, hence
  \[
    \int_0^{jT_0} (t-s)^{-2\alpha} \phi_n(s)\,ds
    < \int_0^{jT_0} (jT_0-s)^{-2\alpha} \phi_n(s)\,ds
    \leq \frac{(jT_0)^{1-2\alpha}}{1-2\alpha} \phi_n^*(T_0),
  \]
  so that
  \begin{align*}
    \phi_{n+1}(t)
    &\lesssim_T \phi_0 + \frac{(jT_0)^{1-2\alpha}}{1-2\alpha} \phi_n^*(jT_0)
      + \int_{jT_0}^t (t-s)^{-2\alpha} \phi_n(s)\,ds\\
    &\lesssim_T \phi_0 + \frac{(jT_0)^{1-2\alpha}}{1-2\alpha} \phi_n^*(jT_0)
    + \phi_n^*((j+1)T_0) \int_{jT_0}^t (t-s)^{-2\alpha} \,ds,
  \end{align*}
  where
  \[
    \int_{jT_0}^t (t-s)^{-2\alpha} \,ds = \int_0^{t-jT_0} s^{-2\alpha}\,ds
    \leq \int_0^{T_0} s^{-2\alpha}\,ds = \frac{T_0^{1-2\alpha}}{1-2\alpha}.
  \]
  This in turn implies, taking the supremum over $[0,(j+1)T_0]$,
  \[
    \phi_{n+1}^*((j+1)T_0) \lesssim_T \phi_0 +
    \frac{(jT_0)^{1-2\alpha}}{1-2\alpha} \phi_n^*(jT_0) +
    \frac{T_0^{1-2\alpha}}{1-2\alpha} \phi_{n}^*((j+1)T_0).
  \]
  Since $\phi^*_n(jT_0)$ is bounded uniformly with respect to $n$ by
  the inductive assumption, we deduce that $\phi_n^*((j+1)T_0)$ is
  bounded uniformly over $n$ as well, thus completing the inductive
  argument.  This implies, by a standard argument based on the closure
  of the Malliavin derivative, that
  $u \in L^\infty([0,T] \times G;\bD^{1,p})$.

  Finally, the equation for $Du$ follows immediately by
  differentiating equation \eqref{eq:60}.
\end{proof}

\subsection{Non-degeneracy of the Malliavin derivative}
\label{sec:matrix}
This section is devoted to study, for any fixed
$(t,x) \in \mathopen]0,T] \times G$, the norm of the Malliavin
derivative of $u(t,x)$. Together with the results of the previous
section, we will deduce the existence of the density for the law of
the random variable $u(t,x)$. Recall that throughout the section we
are assuming that $f$ and $\sigma$ are globally Lipschitz continuous
functions.

We will need an estimate for the norm of $Du(t,x)$ in
\[
H(a,b) := L^2(a,b;L^2_Q), \qquad 0 \leq a < b \leq T.
\]
\begin{prop}
  \label{prop:sta}
  Let $0\leq a<b\leq T$, $p>2$, and $\eta \in
  \mathopen]d/(2q),1/2-1/p\mathclose[$. There exists a positive
  constant $N$, independent of $a$ and $b$, such that
  \[
   \sup_{(t,x)\in [a,b]\times G} \E\norm[\big]{Du(t,x)}^p_{H(a,b)}
   \leq N (b-a)^{p(1/2-\eta)}.
  \]
\end{prop}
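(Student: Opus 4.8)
The plan is to estimate separately the three terms in the right-hand side of equation~\eqref{eq:diff}, which expresses $Du(t,x)$ as the sum of the inhomogeneous term $v_0(t,x)$ and the two convolution-type terms involving $F\,Du$ and $\Sigma\,Du$, this time measuring the $H$-norm not over all of $[0,T]$ but only over the subinterval $[a,b]$. First I would observe that $\norm{\cdot}_{H(a,b)} \leq \norm{\cdot}_H$, so that the already-established bound $u \in L^\infty([0,T]\times G;\bD^{1,p})$ from theorem~\ref{thm:diff} guarantees all the quantities appearing are finite; the point is the sharp dependence on $b-a$. For the inhomogeneous term, I would run the argument of Step~1 in the proof of theorem~\ref{thm:diff} (equivalently lemma~\ref{lm:gemma}) but restricting the time integration defining the $H(a,b)$-norm to $\tau \in [a,b]$: since $v_0(t,x)$ as a function of $(\tau,z)$ is supported on $\tau \leq t$ and one picks up a factor $(t-\tau)^{-\eta}$ from $\norm{S(t-\tau)\sigma(u(\tau))B}_{\gamma(L^2,E^q_\eta)} \lesssim (t-\tau)^{-\eta}$, for $t \in [a,b]$ the integral $\int_a^{b\wedge t}(t-\tau)^{-2\eta}\,d\tau \leq \int_0^{b-a}\tau^{-2\eta}\,d\tau \asymp (b-a)^{1-2\eta}$, giving the desired $(b-a)^{p(1/2-\eta)}$ after taking $p/2$ powers and using the uniform bound on $\E\norm{u}^p_{C([0,T];C(\overline{G}))}$.

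Next I would handle the two $Du$-dependent terms. For the drift convolution $\int_0^t S(t-s)F(s)Du(s)\,ds$, evaluated in the $H(a,b)$-norm, I would split $\int_0^t = \int_0^a + \int_a^t$; the key observation is that for $s \leq a$ the kernel $K_{t-s}(x,z)\,1_{[0,t]}(\tau)$, when one then further restricts the outer $H$-variable $\tau$ to $[a,b]$, still contributes, so one cannot simply drop $\int_0^a$. The cleaner route is to use the semigroup/convolution structure together with proposition~\ref{prop:sc} and the embedding $E^q_\eta(H)\embed L^\infty(H)$ exactly as in Steps~2 and~3 of the proof of theorem~\ref{thm:diff}, but now viewing $Du(s)$ itself as already being in $L^\infty(H)$ with a \emph{uniform-in-$s$} bound $M:=\sup_{(s,y)}\E\norm{Du(s,y)}^p_H$ from theorem~\ref{thm:diff}; then the analogues of Steps~2 and~3 yield $\sup_{(t,x)\in[a,b]\times G}\E\norm{(\text{that term})(t,x)}^p_{H(a,b)} \lesssim_T \int_{?}^{?}(t-s)^{-2\alpha}\,(\text{something})\,ds$, and the crucial gain of $(b-a)$-smallness comes from the fact that the $H(a,b)$-inner integration variable $\tau$ ranges only over $[a,b]$, producing a factor $\int_a^b$ of the relevant integrand, which is of order $(b-a)$ (or a positive power of it), times the uniform bound $M$. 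Iterating, or rather closing, the resulting inequality of Gronwall type on $g(b-a):=\sup_{(t,x)\in[a,b]\times G}\E\norm{Du(t,x)}^p_{H(a,b)}$ then gives $g(b-a) \lesssim (b-a)^{p(1/2-\eta)}$, since the leading inhomogeneous term already has exactly this order and the Volterra iteration only improves the exponent.

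The main obstacle I anticipate is bookkeeping the interplay between the \emph{two} time variables: the ``outer'' time $t\in[a,b]$ at which $Du(t,x)$ is evaluated, and the ``inner'' time $\tau\in[a,b]$ over which the $H(a,b)=L^2(a,b;L^2_Q)$-norm integrates — these are genuinely different from the single time integration appearing in theorem~\ref{thm:diff}, and one must make sure that restricting the inner variable to $[a,b]$ (rather than $[0,t]$) is what produces the $(b-a)$ power, while the $s$-integration coming from the Duhamel formula still ranges over $[0,t]$ and must be controlled by the \emph{global} bound $M$ rather than by smallness. Concretely, for the stochastic-convolution term one writes, with $\alpha\in\mathopen]\eta+1/p,1/2\mathclose[$,
\[
  \sup_{(t,x)\in[a,b]\times G}\E\norm[\big]{[S\diamond(\Sigma\,Du\,B)](t,x)}^p_{H(a,b)}
  \lesssim_T \int_a^b\!\!\int_0^\tau (\tau-s)^{-2\alpha}\,\E\norm[\big]{Du(s)}^p_{L^\infty(H)}\,ds\,d\tau,
\]
and then bounds $\E\norm{Du(s)}^p_{L^\infty(H)}\leq M$ for $s\leq a$ while keeping the self-improving dependence for $s\in[a,b]$; since $\int_a^b\int_0^\tau(\tau-s)^{-2\alpha}\,ds\,d\tau \lesssim (b-a)\cdot b^{1-2\alpha} \lesssim_T (b-a)$, this term is $\lesssim_T M\,(b-a)$, which together with the other two contributions and $p(1/2-\eta)\leq 1$ (as $\eta>1/2-1/p$ fails — rather $\eta<1/2-1/p$ so $p(1/2-\eta)>1$, and one checks the bookkeeping constants to confirm the stated exponent is the binding one) closes the estimate. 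I would double-check at the end that the exponent delivered by the inhomogeneous term, namely $p(1/2-\eta)$, is indeed $\leq$ the exponents produced by the two convolution terms on the relevant range, so that it is this term that dictates the rate in the statement.
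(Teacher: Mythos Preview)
Your estimate of the inhomogeneous term $v_0$ is correct and matches the paper. The difficulty lies in the two convolution terms, and here your argument has a genuine gap.

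The paper's proof is much shorter: it simply reruns the fixed-point argument of theorem~\ref{thm:diff} with $H$ replaced everywhere by $H(a,b)$. Since the Hilbert space enters the estimates of Steps~2--4 only as a parameter, the same Gronwall/iteration closes with a constant $N$ depending on $T$ but not on $a,b$, giving
\[
   \sup_{(t,x)\in[0,T]\times G} \E\norm[\big]{Du(t,x)}^p_{H(a,b)}
   \leq N \sup_{(t,x)\in[0,T]\times G} \E\norm[\big]{v_0(t,x)}^p_{H(a,b)}.
\]
Only the right-hand side then needs to be estimated (your Step-1 calculation), and the rate $(b-a)^{p(1/2-\eta)}$ comes entirely from $v_0$.

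Your alternative route does not work as written. The displayed inequality
\[
  \sup_{(t,x)\in[a,b]\times G}\E\norm[\big]{[S\diamond(\Sigma\,Du\,B)](t,x)}^p_{H(a,b)}
  \lesssim_T \int_a^b\!\!\int_0^\tau (\tau-s)^{-2\alpha}\,\E\norm[\big]{Du(s)}^p_{L^\infty(H)}\,ds\,d\tau
\]
confuses two unrelated time variables: the outer $\tau$-integral in Step~3 of theorem~\ref{thm:diff} arises from the maximal inequality (proposition~\ref{prop:sc}) and runs over $[0,t]$ irrespective of whether the target space is $L^q(H)$ or $L^q(H(a,b))$; it is \emph{not} the integration variable of the $H(a,b)$-norm. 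If you bound $\norm{\cdot}_{H(a,b)}\leq\norm{\cdot}_H$ on the left and apply Step~3 verbatim, you obtain only $\lesssim_T M$ with no $(b-a)$ factor at all; if instead you keep $H(a,b)$ on the right, the unknown $\E\norm{Du(s)}^p_{L^\infty(H(a,b))}$ reappears and you must close a Gronwall, which is precisely the paper's argument. Incidentally, your assertion that one cannot drop the piece $\int_0^a$ is backwards: since $u(s,y)$ is $\cF_s$-measurable, $D_{\tau,\cdot}u(s,y)=0$ for $\tau>s$, so $\norm{Du(s,y)}_{H(a,b)}=0$ whenever $s<a$. Finally, even if your claimed bound $\lesssim_T M\,(b-a)$ for the convolution terms were valid, the exponent $1$ is strictly smaller than $p(1/2-\eta)>1$; the sum would then be of order $(b-a)$ for small $b-a$, not $(b-a)^{p(1/2-\eta)}$, and the proposition would not follow.
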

\begin{proof}
  Repeating the proof of theorem~\ref{thm:diff} with $H$
  replaced by $H(a,b)$, we get
  \[
   \sup_{(t,x) \in [0,T] \times G} \E\norm[\big]{Du(t,x)}^p_{H(a,b)}
   \leq N \sup_{(t,x) \in [0,T] \times G} \E\norm[\big]{v_0(t,x)}^p_{H(a,b)},
  \]
  and, by lemma~\ref{lm:gemma},
  \[
    \sup_{x \in G} \norm[\big]{v_0(t,x)}^2_{H(a,b)}
    \lesssim \int_a^{t \wedge b}
  \norm[\big]{S(t-s) \sigma(u(s)) B}^2_{\gamma(L^2,E_\eta)}\,ds,
  \]
  where
  \[
    \norm[\big]{S(t-s) \sigma(u(s)) B}_{\gamma(L^2,E_\eta)} \lesssim
    (t-s)^{-\eta} \bigl( 1 + \norm{u}_{C([0,T \times \overline{G})}
    \bigr) \norm{B}_{\gamma(L^2,L^q)}.
  \]
  Therefore
  \begin{align*}
    \sup_{(t,x) \in [0,T] \times G} \E\norm[\big]{Du(t,x)}^p_{H(a,b)}
    &\lesssim \bigl( 1 + \norm[\big]{u}^p_{L^p(\Omega;C([0,T \times \overline{G}))}
      \bigr) \sup_{t \leq b} \biggl( \int_a^t (t-s)^{-2\eta}\,ds \biggr)^{p/2}\\
    &\lesssim (b-a)^{(1-2\eta)p/2}.
    \qedhere
  \end{align*}
\end{proof}

\smallskip

In the next result, we establish sufficient conditions under which
the norm of the Malliavin derivative of $u(t,x)$ does not vanish,
almost surely.
\begin{prop}
  \label{prop:1}
  Assume that there exists a constant $c>0$ such that
  $\abs{\sigma(z)} \geq c$ for all $z\in \erre$ and that $Q$ is
  positivity preserving. Let $(t,x) \in \mathopen]0,T] \times G$,
  $\alpha \in \mathopen]0,1/2\mathclose[$, and
  $\eta \in \mathopen]d/(2q),\alpha-1/p\mathclose[$. If there exist
  $\beta \in \mathopen]0,1-\alpha-\eta]$ such that
  \begin{equation}
  \lim_{\delta \to 0} \frac{\delta^{\beta}}{%
    \norm[\big]{K(x,\cdot)}_{H(0,\delta)}} = 0,
    \label{eq:88}
  \end{equation}
  then $\norm{Du(t,x)}_H>0$ almost surely.
\end{prop}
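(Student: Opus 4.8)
The plan is to argue by contradiction, comparing the ``diagonal'' term $v_0(t,x)$ in equation~\eqref{eq:diff} with its two remaining terms, all measured in the Hilbert spaces $H(t-\delta,t)=L^2(t-\delta,t;L^2_Q)$, $\delta\in\mathopen]0,t\mathclose[$, which we view as closed subspaces of $H$ (the functions supported, in the time variable, on $\mathopen]t-\delta,t\mathclose[$; the corresponding orthogonal projection is multiplication by $1_{(t-\delta,t)}$). The point is that as $\delta\to 0$ the diagonal term dominates, at a rate dictated precisely by hypothesis~\eqref{eq:88}. Throughout I would rely on the fact that, by theorem~\ref{thm:diff}, $u\in L^\infty([0,T]\times G;\bD^{1,p})$ and $Du$ solves \eqref{eq:diff}.

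First I would establish a \emph{deterministic} lower bound for $\norm{v_0(t,x)}_{H(t-\delta,t)}$. Since $\sigma$ is continuous and $\abs{\sigma}\geq c>0$, the intermediate value theorem forces $\sigma$ to have constant sign, so assume $\sigma\geq c$ (the other case is symmetric). Fix $s\in\mathopen]0,\delta\mathclose[$ and set $g:=K_s(x,\cdot)\geq 0$ and $\phi:=\sigma(u(t-s,\cdot))\geq c$; then $g\phi=cg+g(\phi-c)$ with $g\geq 0$ and $g(\phi-c)\geq 0$, and, expanding $\norm{g\phi}_{L^2_Q}^2=\ip{Q(g\phi)}{g\phi}_{L^2}$ and using that $Q$ is non-negative definite and positivity preserving (so $Qg\geq 0$),
\[
  \norm[\big]{K_s(x,\cdot)\sigma(u(t-s,\cdot))}_{L^2_Q}^2
  = c^2\norm{g}_{L^2_Q}^2 + 2c\,\ip{Qg}{g(\phi-c)}_{L^2} + \norm[\big]{g(\phi-c)}_{L^2_Q}^2
  \geq c^2\,\norm[\big]{K_s(x,\cdot)}_{L^2_Q}^2 .
\]
Integrating over $s\in\mathopen]0,\delta\mathclose[$ (after the change of variable $\tau=t-s$ in the definition of $v_0$) yields $\norm{v_0(t,x)}_{H(t-\delta,t)}\geq c\,\norm{K(x,\cdot)}_{H(0,\delta)}$, which is deterministic.

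Next I would bound the remainder $r_\delta:=Du(t,x)\big|_{H(t-\delta,t)}-v_0(t,x)\big|_{H(t-\delta,t)}$. Projecting the almost sure identity \eqref{eq:diff} onto $H(t-\delta,t)$, this remainder equals the sum of the drift convolution and the stochastic convolution in \eqref{eq:diff}, with $Du(s,\cdot)$ there replaced by $Du(s,\cdot)\big|_{H(t-\delta,t)}$. By the causal structure of the Malliavin derivative --- $u(s,y)$ being $\cF_s$-measurable, $Du(s,y)$ vanishes in $H$ on $\mathopen]s,T\mathclose[$ --- one has $Du(s,y)\big|_{H(t-\delta,t)}=0$ whenever $s\leq t-\delta$, so both convolutions involve only the values of their integrands over the \emph{time interval $\mathopen]t-\delta,t\mathclose[$ of length $\delta$}. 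Rerunning the proof of theorem~\ref{thm:diff} (Steps~2 and 3) with $H$ replaced by $H(t-\delta,t)$ and the time integrals confined to $\mathopen]t-\delta,t\mathclose[$, and inserting the bound $\sup_{s\in[t-\delta,t],\,x\in G}\E\norm{Du(s,x)}_{L^\infty(G;H(t-\delta,t))}^p\lesssim\delta^{p(1/2-\eta)}$ furnished by (the proof of) proposition~\ref{prop:sta}, one obtains
\[
  \E\norm[\big]{r_\delta}_{H(t-\delta,t)}^p \lesssim \delta^{p\gamma}, \qquad
  \gamma > 1-\alpha-\eta \geq \beta,
\]
with constant independent of $\delta\in\mathopen]0,t\mathclose[$: concretely, the drift term contributes the exponent $3/2-2\eta$ and the stochastic convolution the exponent $1-\alpha-\eta+1/p$ (plus the further positive power of $\delta$ coming from the factor $T^{p\eps}$ in proposition~\ref{prop:sc}), the assumption $\eta<\alpha-1/p$ ensuring that $\eps>0$ and that the singular kernels involved are integrable, and $\beta\leq 1-\alpha-\eta$ ensuring $\gamma>\beta$.

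Finally I would close the argument. Suppose $p_0:=\P\bigl(\norm{Du(t,x)}_H=0\bigr)>0$. On the event $\{\norm{Du(t,x)}_H=0\}$ one has $\norm{Du(t,x)}_{H(t-\delta,t)}=0$ for every $\delta\in\mathopen]0,t\mathclose[$, hence $v_0(t,x)\big|_{H(t-\delta,t)}=-r_\delta$ and, by the lower bound above, $c\,\norm{K(x,\cdot)}_{H(0,\delta)}\leq\norm{r_\delta}_{H(t-\delta,t)}$. Chebyshev's inequality and the remainder estimate then give, for every $\delta\in\mathopen]0,t\wedge 1\mathclose[$,
\[
  p_0 \leq \P\Bigl(\norm{r_\delta}_{H(t-\delta,t)}\geq c\,\norm{K(x,\cdot)}_{H(0,\delta)}\Bigr)
  \lesssim \frac{\delta^{p\gamma}}{\norm{K(x,\cdot)}_{H(0,\delta)}^p}
  \leq \Bigl(\frac{\delta^{\beta}}{\norm{K(x,\cdot)}_{H(0,\delta)}}\Bigr)^{\!p},
\]
and the right-hand side tends to $0$ as $\delta\to 0$ by \eqref{eq:88}, a contradiction. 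Hence $\norm{Du(t,x)}_H>0$ almost surely.

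I expect the remainder estimate to be the main obstacle: one has to verify that, once localized to the shrinking interval $\mathopen]t-\delta,t\mathclose[$, the two integral terms of \eqref{eq:diff} are of order $\delta^\gamma$ with $\gamma$ \emph{strictly} above the threshold $1-\alpha-\eta$, equivalently of strictly smaller order than $\norm{K(x,\cdot)}_{H(0,\delta)}$. This requires keeping track of every power of $\delta=b-a$ produced by the Jensen-type estimates in the proof of theorem~\ref{thm:diff} and in proposition~\ref{prop:sta}, and not discarding the gains from the outer time integration or from the smoothing factor $T^{p\eps}$ in proposition~\ref{prop:sc}: the naive bound for the stochastic term alone is only of order $\delta^{1/2-\eta+(1-2\alpha)/p}$, which for $p>2$ need not beat $\delta^{1-\alpha-\eta}$.
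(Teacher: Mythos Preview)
Your approach is essentially the same as the paper's: lower bound $\norm{v_0(t,x)}_{H(t-\delta,t)}$ by $c\,\norm{K(x,\cdot)}_{H(0,\delta)}$ using positivity of $K$ and $Q$, bound the remainder $Y$ by rerunning Steps~2--3 of the proof of theorem~\ref{thm:diff} with $H$ replaced by $H_\delta$ and invoking proposition~\ref{prop:sta}, then apply Chebyshev and let $\delta\to 0$. The paper frames this via $\P(\norm{Du(t,x)}_H\leq 1/n)$ and passes $n\to\infty$, while you argue by contradiction on $\P(\norm{Du(t,x)}_H=0)>0$; these are equivalent.

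Your one misconception is the belief that you must push the remainder exponent \emph{strictly above} $1-\alpha-\eta$. The paper is content with exactly $\gamma=1-\alpha-\eta$: rerunning Steps~2--3 on the interval of length $\delta$ gives
\[
  \E\norm{Y}^p_{H_\delta}\;\lesssim\;\delta^{p(1/2-\alpha)}\,
  \norm{Du}^p_{L^\infty([0,T]\times G;L^p(\Omega;H_\delta))},
\]
and proposition~\ref{prop:sta} supplies the factor $\delta^{p(1/2-\eta)}$, yielding $\E\norm{Y}^p_{H_\delta}\lesssim\delta^{p(1-\alpha-\eta)}$. This already closes the argument, because the hypothesis is $\beta\leq 1-\alpha-\eta$, so for $\delta<1$ one has $\delta^{1-\alpha-\eta}\leq\delta^\beta$ and hence $\delta^{1-\alpha-\eta}/\norm{K(x,\cdot)}_{H(0,\delta)}\leq\delta^\beta/\norm{K(x,\cdot)}_{H(0,\delta)}\to 0$. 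The extra gains you worry about in your last paragraph (the $T^{p\eps}$ from proposition~\ref{prop:sc}, the outer time integration) are therefore not needed, and the ``naive'' exponent you compute need not be compared to $1-\alpha-\eta$ at all---the paper's Jensen-with-probability-measure trick in Step~2 (which you should rerun over $[0,\delta]$ rather than $[0,t]$) already produces $\delta^{p(1/2-\alpha)}$ directly.
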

\begin{proof}
  We are going to estimate $\P(\norm{Du(t,x)}_H \leq 1/n)$ for $n \in
  \enne$ and pass to the limit as $n \to \infty$.
  Let $\delta \in \mathopen]0,1\mathclose[$, and set, for compactness
  of notation, $H_\delta:= H(t-\delta,t)$. The obvious inequality
  $\norm{a+b} \geq \norm{a} - \norm{b}$ applied to the expression of
  $Du$ given by theorem~\ref{thm:diff} yields
  \[
    \norm[\big]{Du(t,x)}_H \geq \norm[\big]{v_0(t,x)}_{H_\delta}
    - \norm[\big]{S \ast (F Du)(t,x) +
      S \diamond (\Sigma Du B)(t,x)}_{H_\delta}.
  \]
  Hence, simplifying the notation a bit and denoting the second term
  within the norm on the right-hand side by $Y$,
  \[
  \P\bigl( \norm{Du} \leq 1/n \bigr) \leq
  \P\bigl( \norm{v_0} - \norm{Y} \leq 1/n \bigr) =
  \P\bigl( \norm{Y} \geq \norm{v_0} - 1/n \bigr).
  \]
  Since $Q$ as well as the semigroup $S$ is positivity preserving,
  hence $K$ is positive, and $\sigma: \erre \to \erre$ is continuous,
  we have
  \begin{align*}
    \norm[\big]{v_0(t,x)}^2_{H_\delta}
    &= \int_{t-\delta}^t \norm[\big]{%
    K_{t-s}(x,\cdot) \sigma(u(s,\cdot))}^2_{L^2_Q} \,ds\\
    &= \int_{t-\delta}^t \! \int_G K_{t-s}(x,y)
      \sigma(u(s,y)) \,Q [K_{t-s}(x,\cdot)
      \sigma(u(s,\cdot))](y)  \, dy\,ds\\
    &= \int_{t-\delta}^t \! \int_G K_{t-s}(x,y)
      \abs{\sigma(u(s,y))} \,Q [K_{t-s}(x,\cdot)
      \abs{\sigma(u(s,\cdot))}](y) \,dy\,ds\\
    &\geq c^2 \int_{t-\delta}^t \! \int_G K_{t-s}(x,y)
      Q [K_{t-s}(x,\cdot)](y)  \, dy\,ds\\
    &= c^2 \int_0^\delta \norm[\big]{K_s(x,\cdot)}^2_{L^2_Q}\,ds
    = c^2 \norm[\big]{K_\cdot(x,\cdot)}^2_{H(0,\delta)}.
  \end{align*}
  This implies that we can use Chebyshev's inequality to write, for
  $n$ sufficiently large,
  \begin{align*}
  \P\bigl( \norm{Du(t,x)}_H \leq 1/n \bigr)
  &\leq \P\bigl( \norm{Y}_{H_\delta} \geq c \norm{K_\cdot(x,\cdot)}_{H(0,\delta)}
  - 1/n \bigr)\\
  &\leq \frac{\E\norm{Y}^p_{H_\delta}}{\bigl(c \norm{K_\cdot(x,\cdot)}_{H(0,\delta)}
  - 1/n \bigr)^p},
  \end{align*}
  where, thanks to theorem~\ref{thm:diff} and
  proposition~\ref{prop:sta},
  \begin{align*}
    \E\norm{Y}^p_{H_\delta}
    &= \E\norm[\big]{S \ast (F Du)(t,x)
    + S \diamond (\Sigma Du B)(t,x)}^p_{H_\delta}\\
    &\lesssim \delta^{p(1/2-\alpha)}
      \norm[\big]{Du}^p_{L^\infty([0,T] \times G;L^p(\Omega;H_\delta))}\\
    &\lesssim \delta^{p(1-\alpha-\eta)}.
  \end{align*}
  Taking the limit as $n \to \infty$, we are left with
  \[
  \P\bigl( \norm{Du(t,x)}_H = 0 \bigr) \lesssim
  \biggl( \frac{\delta^{1-\alpha-\eta}}{\norm{K_\cdot(x,\cdot)}_{H(0,\delta)}}
  \biggr)^p
  \]
  Since this inequality holds for every
  $\delta \in \mathopen]0,1\mathclose[$, and the limit of the
  right-hand side as $\delta \to 0$ is zero by assumption, it follows
  that $\P\bigl( \norm{Du(t,x)}_H = 0 \bigr) = 0$.
\end{proof}

As an immediate consequence of the above result and of theorem
\ref{thm:diff} we obtain sufficient conditions for the pointwise
absolute continuity of the law of the mild solution to \eqref{eq:0},
thanks to well-known criteria of the Malliavin calculus (see, e.g.,
\cite[theorem~2.1.3]{nualart}).
\begin{thm}
  \label{thm:1}
  Let $u \in L^p(\Omega;C([0,T];C(\overline{G})))$ be the unique mild
  solution to equation \eqref{eq:0}, with $f$ and $\sigma$
  Lipschitz continuous and $u_0 \in C(\overline{G})$. Assume that
  there exists $c>0$ such that $\abs{\sigma(z)} \geq c >0$ for all
  $z \in \erre$ and $Q=B B^*$ is positivity preserving.  Let
  $(t,x) \in \mathopen]0,T] \times G$,
  $\alpha \in \mathopen]0,1/2\mathclose[$, and
  $\eta \in \mathopen]d/(2q),\alpha-1/p\mathclose[$. If there exist
  $\beta \in \mathopen]0,1-\alpha-\eta]$ such that \eqref{eq:88} is
  fulfilled, then the law of the random variable $u(t,x)$ is
  absolutely continuous with respect to Lebesgue measure.
\end{thm}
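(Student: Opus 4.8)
The plan is to combine the two preceding results with the classical Bouleau–Hirsch absolute-continuity criterion. Concretely, Theorem~\ref{thm:diff} shows that for every fixed $(t,x) \in \mathopen]0,T] \times G$ the random variable $u(t,x)$ belongs to $\bD^{1,p}$ (indeed $u \in L^\infty([0,T]\times G;\bD^{1,\infty})$), and Proposition~\ref{prop:1}, under the hypotheses $\abs{\sigma(z)} \geq c > 0$, $Q$ positivity preserving, and the limit condition~\eqref{eq:88}, shows that $\norm{Du(t,x)}_H > 0$ almost surely. Since $\bD^{1,p} \subset \bD^{1,1}$, the Bouleau–Hirsch criterion (as stated e.g.\ in \cite[theorem~2.1.3]{nualart}) applies directly: a scalar random variable in $\bD^{1,1}$ whose Malliavin derivative is almost surely nonzero has a law absolutely continuous with respect to Lebesgue measure. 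This yields the claim immediately.

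The only point requiring a word of care is that the Malliavin calculus must be set up on the correct Gaussian space. As recalled at the beginning of \S\ref{sec:Lip}, the relevant isonormal Gaussian process is the one on $H = L^2(0,T;L^2_Q)$ associated to the cylindrical $Q$-Wiener process $\mathcal{W}$, and the derivative $Du(t,x)$ produced by Theorem~\ref{thm:diff} is precisely an $H$-valued random variable on this space. So the hypotheses of the criterion are met with respect to this fixed Gaussian structure, and no change of space is needed between the differentiability result and the non-degeneracy result.

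There is essentially no obstacle here: the theorem is a packaging statement whose entire content has been established in Theorem~\ref{thm:diff} and Proposition~\ref{prop:1}. If anything, the ``hard part'' is purely bookkeeping — checking that the parameter ranges in the hypothesis ($\alpha \in \mathopen]0,1/2\mathclose[$, $\eta \in \mathopen]d/(2q),\alpha-1/p\mathclose[$, $\beta \in \mathopen]0,1-\alpha-\eta]$) are exactly those under which both previous results are valid, which they are by inspection. I would therefore write the proof in two sentences: invoke Theorem~\ref{thm:diff} for $u(t,x) \in \bD^{1,1}$, invoke Proposition~\ref{prop:1} for $\norm{Du(t,x)}_H > 0$ a.s., and conclude by the Bouleau–Hirsch criterion.
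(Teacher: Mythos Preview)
Your proposal is correct and matches the paper's approach exactly: the paper does not even give a separate proof for this theorem, but states it as an immediate consequence of theorem~\ref{thm:diff} and proposition~\ref{prop:1} together with the absolute-continuity criterion in \cite[theorem~2.1.3]{nualart}. Your two-sentence summary is precisely what the authors intended.
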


\begin{example}
  \label{ex:1}
  Assume that $A$ has compact resolvent in $L^2$. Since $A$ is
  accretive and self-adjoint, there exist an orthonormal basis
  $(e^k)_{k \in \enne}$ of $L^2$ and a sequence
  $(\lambda_k)_{k \in \enne} \geq 0$ such that $e^k \in \dom(A)$,
  $Ae^k=\lambda_k e^k$ and $\lim_{k \to
    \infty}\lambda_k=+\infty$. Moreover, let $B=(I+A)^{-m}$, with
  $m \in \enne$, and fix $(t,x)\in \mathopen]0,T] \times G$. Since
  $Q=(I+A)^{-2m}$, one has, for any $\delta\in \mathopen]0,1\mathclose[$,
  \begin{align*}
    \norm{K_\cdot(x,\cdot)}^2_{H(0,\delta)}
    & = \int_0^\delta \!\! \int_G K_s(x,y) [Q K_s(x,\cdot)](y)\,dy\,ds\\
    & = \int_0^\delta \sum_{k\geq 0} (1+\lambda_n)^{-2m}
        \langle{K_s(x,\cdot)},{e^k}\rangle_{L^2}^2 \,ds \\
    & = \int_0^\delta \sum_{k\geq 1} (1+\lambda_n)^{-2m} \,
        e^{-2s \lambda_n} |e^k(x)|^2 \,ds \\
    & = \frac12 \sum_{k\geq 1} (1+\lambda_n)^{-2m} \, \lambda_n^{-1}
        \,(1-e^{-2 \delta \lambda_n}) |e^k(x)|^2.
  \end{align*}
  Moreover, we have that
  \[
    1-e^{-2\delta \lambda_n} \geq \frac{2 \delta \lambda_n}{1+2 \delta
      \lambda_n} \geq \frac{2 \delta \lambda_n}{1+2 \lambda_n}.
  \]
  Hence
  \[
    \|K_\cdot(x,\cdot)\|^2_{H(0,\delta)} \geq \delta \; \sum_{k\geq 1}
    (1+\lambda_n)^{-2m} \, (1+2 \lambda_n)^{-1} \, |e^k(x)|^2.
  \]
  Assuming that $x \in G$ is such that there exists $k \in \enne$ for
  which $e^k(x) \neq 0$, the quantity
  \[
    C_x := \sum_{k\geq 1} (1+\lambda_n)^{-2m} \, (1+2 \lambda_n)^{-1} \,
    |e^k(x)|^2
  \]
  is strictly positive. Therefore we have
  $\norm{K_\cdot(x,\cdot)}^2_{H(0,\delta)}  \geq C_x \delta$, i.e.
  \[
    \frac{\delta^{1/2}}{\norm{K_\cdot(x,\cdot)}_{H(0,\delta)}}
    \leq C_x^{-1/2},
  \]
  which implies that condition \eqref{eq:88}, hence also the
  assumptions of theorem \ref{thm:1}, are satisfied if we can find
  $\alpha$ and $\eta$ such that $1-\alpha-\eta>1/2$. This is possible
  if $m$ is sufficiently large, so that $B \in \gamma(L^2,L^q)$ with
  $q$ large and $d/(2q)$ is smaller than, say, $1/4$.
\end{example}


\section{Reaction-diffusion equations}
\label{sec:loc}
Let us now consider equation \eqref{eq:0} in the general case,
i.e. assuming that $f\colon \erre \to \erre$ is an odd polynomial with
negative leading coefficient. As already observed, we could also
assume that $x \mapsto f(x) - \lambda x$ is decreasing for some
$\lambda \geq 0$, locally Lipschitz continuous, and with polynomial
growth.

Let $u_0 \in C(\overline{G})$, and
$u \in L^p(\Omega;C([0,T];C(\overline{G})))$ be the unique mild
solution to \eqref{eq:0}, the existence of which is guaranteed by
proposition \ref{prop:0}. For every $n \in \enne$, consider the
function $f_n \colon \erre \to \erre$ defined as
\[
f_n(x) = \begin{cases} f(x), & \abs{x} \leq n,\\
f(nx/\abs{x}), & \abs{x} > n.
\end{cases}
\]
Then $f_n$ is Lipschitz continuous, and the equation
\[ 
  du_n(t) + Au_n(t)\,dt = f_n(u_n(t))\,dt + \sigma(u(t)) B\,dW(t),
  \qquad u(0)=u_0,
\]
admits a unique mild solution
$u_n \in L^p(\Omega;C([0,T];C(\overline{G})))$. Moreover, by
construction of $u$ (see \cite{KvN2}), $u_n$ coincides with $u$ on the
stochastic interval $[\![0,T_n]\!]$, where the stopping time
$T_n$ is defined as
\[
  T_n := \inf \bigl\{ t \geq 0:\, \norm{u_n(t)}_{C(\overline{G})}
  \geq n \bigr\} \wedge T,
\]
and $\lim_{n \to \infty} T_n = T$ almost surely. In particular,
$u_n \to u$ in $L^r(\Omega;C([0,T];C(\overline{G})))$ for all
$r \in [1,p\mathclose[$.
Let $t \in \mathopen]0,T]$ be arbitrary but fixed and set, for every
$n \in \enne$,
\[
  \Omega_n := \bigl\{ \omega \in \Omega:\, t \leq T_n(\omega)
  \bigr\}.
\]
Since $(T_n)$ is a sequence of stopping times monotonically increasing
to $T$ as $n \to \infty$, $(\Omega_n)$ is a sequence in $\cF$
monotonically increasing to $\Omega$ as $n \to \infty$. Clearly $\{t\}
\times \Omega_n \subset [\![0,T_n]\!]$, hence $u(t)=u_n(t)$ on
$\Omega_n$, as an identity in $C(\overline{G})$. This implies that
$u(t,x) = u_n(t,x)$ on $\Omega_n$ for every $x \in G$. Moreover, as
$f_n$ is Lipschitz continuous, theorem \ref{thm:diff} implies that
$u_n(t,x) \in \bD^{1,p}$ for every $x \in G$, for all $p \geq 1$.  We
have thus shown that $u(t,x) \in \bD^{1,p}_{\mathrm{loc}}$, with
localizing sequence $(\Omega_n, u_n(t,x))$ (cf. \cite[{\S}III]{BouHir}
or \cite[\S1.3.5]{nualart}). This implies that $u(t,x)$ is Malliavin
differentiable, i.e. that there exists a random variable $Du(t,x)$,
independent of the chosen localizing sequence, such that
$Du(t,x)=Du_n(t,x)$ on $\Omega_n$.

We are now in the position to state and prove the main result of the
paper.
\begin{thm}
  \label{thm:main}
  Let $u \in L^p(\Omega;C([0,T];C(\overline{G})))$ be the unique mild
  solution to equation \eqref{eq:0} with initial datum $u_0 \in
  C(\overline{G})$. Assume that $Q=BB^*$ is positivity preserving and
  that there exists $c>0$ such that $\abs{\sigma(z)} \geq c >0$ for
  all $z\in \erre$.  Let $(t,x) \in \mathopen]0,T] \times G$, $\alpha
  \in \mathopen]0,1/2\mathclose[$, and $\eta \in
  \mathopen]d/(2q),\alpha-1/p\mathclose[$. If there exist $\beta \in
  \mathopen]0,1-\alpha-\eta]$ such that
  \[
   \lim_{\delta \to 0} \frac{\delta^{\beta}}{%
    \norm[\big]{K(x,\cdot)}_{H(0,\delta)}} = 0,
  \]
  then the law of the random variable $u(t,x)$ is absolutely
  continuous with respect to the Lebesgue measure on $\erre$.
\end{thm}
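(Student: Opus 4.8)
The plan is to reduce the statement, by means of the Bouleau--Hirsch criterion, to the strict positivity of $\norm{Du(t,x)}_H$, and then to deduce the latter from Proposition~\ref{prop:1} applied to the truncated solutions $u_n$. Recall that the discussion preceding the statement has already established that $u(t,x) \in \bD^{1,p}_{\mathrm{loc}}$, with localizing sequence $(\Omega_n,u_n(t,x))$, that $Du(t,x)=Du_n(t,x)$ on $\Omega_n$, and that $\Omega_n \uparrow \Omega$. By the Bouleau--Hirsch criterion (cf. \cite[{\S}III]{BouHir} or \cite[\S1.3.5]{nualart}), since $u(t,x)$ is locally Malliavin differentiable it suffices to prove that $\norm{Du(t,x)}_H > 0$ almost surely, and the pointwise absolute continuity of the law of $u(t,x)$ then follows.

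Fix $n \in \enne$. The process $u_n$ is the unique mild solution of equation \eqref{eq:0} with the drift $f$ replaced by the globally Lipschitz continuous function $f_n$, so that the proofs of Theorem~\ref{thm:diff} and Proposition~\ref{prop:sta} apply to $u_n$, and hence so does the argument of Proposition~\ref{prop:1}: the coefficient $\sigma$ and the operator $Q=BB^*$ are unchanged, the constant $c$ with $\abs{\sigma(z)} \geq c$ for all $z \in \erre$ and the positivity preservation of $Q$ are the standing assumptions of the present theorem, the parameters $\alpha$, $\eta$, $\beta$ are exactly those fixed in the statement, and condition \eqref{eq:88} involves only the kernel $K$ of the semigroup generated by $-A$, which does not depend on $n$. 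Applying Proposition~\ref{prop:1} to $u_n$ therefore yields $\norm{Du_n(t,x)}_H > 0$ almost surely, that is, $\P\bigl(\norm{Du_n(t,x)}_H = 0\bigr) = 0$. The constants entering the proof of Proposition~\ref{prop:1} (through Theorem~\ref{thm:diff} and Proposition~\ref{prop:sta}) depend on the Lipschitz constant of $f_n$ and on $\norm{u_n}_{L^p(\Omega;C([0,T];C(\overline{G})))}$, hence may grow with $n$; this is immaterial, as only their finiteness for each fixed $n$ is used, no uniformity in $n$ being needed.

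To transfer this to $u$ itself, recall that $Du(t,x)=Du_n(t,x)$ on $\Omega_n$ and $\Omega_n \uparrow \Omega$, whence
\[
  \P\bigl(\norm{Du(t,x)}_H = 0\bigr)
  = \lim_{n\to\infty} \P\bigl( \{\norm{Du(t,x)}_H = 0\} \cap \Omega_n \bigr)
  = \lim_{n\to\infty} \P\bigl( \{\norm{Du_n(t,x)}_H = 0\} \cap \Omega_n \bigr),
\]
and each term on the right-hand side is bounded above by $\P\bigl(\norm{Du_n(t,x)}_H = 0\bigr) = 0$. Hence $\norm{Du(t,x)}_H > 0$ almost surely, and the conclusion follows from the Bouleau--Hirsch criterion as recalled above.

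The analytic content has essentially all been dispatched in the preceding sections --- well-posedness in $C(\overline{G})$, pointwise Malliavin differentiability (Theorem~\ref{thm:diff}), and the small-$\delta$ lower bound on $\norm{v_0(t,x)}_{H(t-\delta,t)}$ together with the matching upper bound on the remainder in Proposition~\ref{prop:1} --- so the only point deserving attention is that the proof of Proposition~\ref{prop:1} carries over verbatim with $f$ replaced by $f_n$: one must check that the random fields $F$ and $\Sigma$ associated to the truncated equation remain bounded (which they are, since $f_n'$ and $\sigma'$ are bounded for each $n$) and that the relevant constants, although $n$-dependent, are finite. Granting this, the rest is the routine exhaustion over the localizing sets carried out above.
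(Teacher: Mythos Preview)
Your proof is correct and follows essentially the same approach as the paper: reduce to $\norm{Du(t,x)}_H>0$ a.s.\ via Bouleau--Hirsch, apply Proposition~\ref{prop:1} to each $u_n$, and then exhaust over the localizing sets $\Omega_n$. The only cosmetic difference is that the paper concludes by a contradiction argument whereas you compute the limit $\P(\{\norm{Du(t,x)}_H=0\}\cap\Omega_n)\to \P(\norm{Du(t,x)}_H=0)$ directly; your additional remarks on the $n$-dependence of the constants in Proposition~\ref{prop:1} are accurate and make the argument slightly more explicit than the paper's.
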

\begin{proof}
  Let $(t,x) \in G_T$ be arbitrary but fixed. Then, by the
  Bouleau-Hirsch' criterion (see \cite[proposition~7.1.4]{BouHir}), it
  suffices to prove that $\norm{Du(t,x)}_H > 0$ almost surely. Since
  $f_n$ is Lipschitz continuous for all $n \in \enne$,
  $\norm{Du(t,x)}_H >0$ on $\Omega_n$ for all $n \in \enne$. This
  readily implies that $\norm{Du(t,x)}_H >0$ almost surely: assume by
  contradiction that there exists $\Omega' \subset \Omega$ with
  strictly positive probability such that $\norm{Du(t,x)}_H =0$ on
  $\Omega'$. Since $\Omega_n$ increases monotonically to $\Omega$,
  there exists $n_0 \in \mathbb{N}$ such that
  $\P\left(\Omega''\right)>0$, where $\Omega'':=\Omega_{n_0} \cap
  \Omega'$. In particular, by definition of $\Omega_{n_0}$, one has
  $\norm{Du(t,x)}_H >0$ on $\Omega''$ because $\Omega'' \subset
  \Omega_{n_0}$. This is clearly a contradiction, because $\Omega''
  \subset \Omega'$. The claim is thus proved.
\end{proof}

\begin{rmk}
  Very minor adjustments allow to consider the case where
  $\sigma \colon \erre \to \erre$ is locally Lipschitz continuous with
  linear growth. In fact, the construction of a unique global solution
  is obtained again by \textit{r\'ecollement} of local solutions (see
  \cite{KvN2}), and the above reasoning can be repeated almost
  verbatim.
\end{rmk}

\begin{rmk}
  The setting of example \ref{ex:1} obviously satisfies the
  assumptions of theorem \ref{thm:main}.
\end{rmk}

\let\oldbibliography\thebibliography
\renewcommand{\thebibliography}[1]{%
  \oldbibliography{#1}%
  \setlength{\itemsep}{-1pt}%
}
\bibliographystyle{amsplain}
\bibliography{ref,lluis}

\end{document}